\documentclass[reqno, 12pt]{amsart}
\makeatletter
\let\origsection=\section \def\section{\@ifstar{\origsection*}{\mysection}} 
\def\mysection{\@startsection{section}{1}\z@{.7\linespacing\@plus\linespacing}{.5\linespacing}{\normalfont\scshape\centering\S}}
\makeatother        

\usepackage{amsmath,amssymb,amsthm}
\usepackage{mathrsfs}
\usepackage{mathabx}\changenotsign
\usepackage{dsfont}

\usepackage{xcolor}
\usepackage[backref]{hyperref}
\hypersetup{
    colorlinks,
    linkcolor={red!60!black},
    citecolor={green!60!black},
    urlcolor={blue!60!black}
}

\usepackage[open,openlevel=2,atend]{bookmark}

\usepackage[abbrev,msc-links,backrefs]{amsrefs}

\usepackage{doi}

\renewcommand{\PrintDOI}[1]{\doi{#1}}

\usepackage[T1]{fontenc}
\usepackage{lmodern}
\usepackage[babel]{microtype}
\usepackage[english]{babel}

\linespread{1.3}
\usepackage{geometry}
\geometry{left=27.5mm,right=27.5mm, top=25mm, bottom=25mm}

\usepackage{enumitem}
\def\rmlabel{\upshape({\itshape \roman*\,})}

\def\alabel{\upshape({\itshape \alph*\,})}

\def\nlabel{\upshape({\itshape \arabic*\,})}

\def\Flabel{\upshape({\itshape F\,})}

\usepackage{tikz}
\usepackage{scalerel}

\newsavebox\vdegbox
\savebox\vdegbox{\tikz{
		\draw[black,fill=black] (90:1) circle (.35);
		\draw[black,line width=0.10cm] (210:1) circle (.30);
		\draw[black,line width=0.10cm] (330:1) circle (.30);
		\draw[opacity=0] (0:1.2) circle (0.1);
	}}

\newsavebox\vvbox
\savebox\vvbox{\tikz{
		\draw[black,line width=0.10cm] (90:1) circle (.30);
		\draw[black,fill=black] (210:1) circle (.35);
		\draw[black,fill=black] (330:1) circle (.35);
		\draw[opacity=0] (0:1.2) circle (0.1);
	}}

\newsavebox\pdegbox
\savebox\pdegbox{\tikz{
		\draw[black,line width=0.10cm] (90:1) circle (.30);
		\draw[black,fill=black] (210:1) circle (.35);
		\draw[black,fill=black] (330:1) circle (.35);
		\draw[black,line width=0.28cm ] (210:1) -- (330:1);
		\draw[opacity=0] (0:1.2) circle (0.1);
	}}

\newsavebox\vvvbox
\savebox\vvvbox{\tikz{
		\draw[black,fill=black] (90:1) circle (.35);
		\draw[black,fill=black] (210:1) circle (.35);
		\draw[black,fill=black] (330:1) circle (.35);
		\draw[opacity=0] (0:1.2) circle (0.1);
	}}
\newcommand{\vvv}{\mathord{\scaleobj{1.2}{\scalerel*{\usebox{\vvvbox}}{x}}}}
\newcommand{\pivvv}{\pi_{\vvv}}

\newsavebox\evbox
\savebox\evbox{\tikz{
		\draw[black,fill=black] (90:1) circle (.35);
		\draw[black,fill=black] (210:1) circle (.35);
		\draw[black,fill=black] (330:1) circle (.35);
		\draw[black,line width=0.28cm ] (210:1) -- (330:1);
		\draw[opacity=0] (0:1.2) circle (0.1);
	}}
\newcommand{\ev}{\mathord{\scaleobj{1.2}{\scalerel*{\usebox{\evbox}}{x}}}}	
\newcommand{\piev}{\pi_{\ev}}

\newsavebox\eebox
\savebox\eebox{\tikz{
		\draw[black,fill=black] (90:1) circle (.35);
		\draw[black,fill=black] (210:1) circle (.35);
		\draw[black,fill=black] (330:1) circle (.35);
		\draw[black,line width=0.28cm ] (90:1) -- (330:1);
		\draw[black,line width=0.28cm ] (90:1) -- (210:1);
		\draw[opacity=0] (0:1.2) circle (0.1);
	}}
\newcommand{\ee}{\mathord{\scaleobj{1.2}{\scalerel*{\usebox{\eebox}}{x}}}}
\newcommand{\piee}{\pi_{\ee}}

\newsavebox\eeebox
\savebox\eeebox{\tikz{
		\draw[black,fill=black] (90:1) circle (.35);
		\draw[black,fill=black] (210:1) circle (.35);
		\draw[black,fill=black] (330:1) circle (.35);
		\draw[black,line width=0.28cm ] (90:1) -- (330:1);
		\draw[black,line width=0.28cm ] (90:1) -- (210:1);
		\draw[black,line width=0.28cm ] (210:1) -- (330:1);
		\draw[opacity=0] (0:1.2) circle (0.1);
	}}

\theoremstyle{plain}
\newtheorem{thm}{Theorem}[section]
\newtheorem{fact}[thm]{Fact}
\newtheorem{prop}[thm]{Proposition}

\newtheorem{lemma}[thm]{Lemma}

\theoremstyle{definition}
\newtheorem{rem}[thm]{Remark}
\newtheorem{dfn}[thm]{Definition}

\let\eps=\varepsilon
\let\theta=\vartheta
\let\rho=\varrho
\let\phi=\varphi

\def\NN{\mathbb N}
 
\def\PP{\mathbb P}

\def\cA{{\mathcal A}}

\def\cM{{\mathcal M}}
\def\cF{{\mathcal F}}

\def\cC{{\mathcal C}}
\def\cD{{\mathcal D}}
\def\cP{{\mathcal P}}

\def\cE{{\mathcal E}}

\def\cK{{\mathcal K}}

\def\ccC{{\mathscr{C}}}

\def\ccP{{\mathscr{P}}}

\def\ccS{\mathscr{S}}

\DeclareMathOperator{\ex}{ex}

\let\polishlcross=\l
\def\l{\ifmmode\ell\else\polishlcross\fi}

\makeatletter
\def\moverlay{\mathpalette\mov@rlay}
\def\mov@rlay#1#2{\leavevmode\vtop{%
   \baselineskip\z@skip \lineskiplimit-\maxdimen
   \ialign{\hfil$\m@th#1##$\hfil\cr#2\crcr}}}
\newcommand{\charfusion}[3][\mathord]{
    #1{\ifx#1\mathop\vphantom{#2}\fi
        \mathpalette\mov@rlay{#2\cr#3}
      }
    \ifx#1\mathop\expandafter\displaylimits\fi}
\makeatother

\newcommand{\dcup}{\charfusion[\mathbin]{\cup}{\cdot}}
\newcommand{\bigdcup}{\charfusion[\mathop]{\bigcup}{\cdot}}

\def\tand{\ \text{and}\ }
\def\qand{\quad\text{and}\quad}

\let\st=\colon
\def\bl{\bigl(}
\def\br{\bigr)}

\let\setminus=\smallsetminus
\let\emptyset=\varnothing
\let\vn=\varnothing

\begin{document}
\title[Some remarks on $\piee$]{Some remarks on $\piee$}

\author[Christian Reiher]{Christian Reiher}
\address{Fachbereich Mathematik, Universit\"at Hamburg, Hamburg, Germany}
\email{Christian.Reiher@uni-hamburg.de}

\author[Vojt\v{e}ch R\"{o}dl]{Vojt\v{e}ch R\"{o}dl}
\address{Department of Mathematics and Computer Science, 
Emory University, Atlanta, USA}
\email{rodl@mathcs.emory.edu}
\thanks{The second author was supported by NSF grants DMS 1301698 and 1102086.}

\author[Mathias Schacht]{Mathias Schacht}
\address{Fachbereich Mathematik, Universit\"at Hamburg, Hamburg, Germany}
\email{schacht@math.uni-hamburg.de}
\thanks{The third author was supported through the Heisenberg-Programme of the DFG}

\keywords{hypergraphs, extremal graph theory, Tur\'an's problem}
\subjclass[2010]{05C35 (primary), 05C65, 05C80 (secondary)}

\dedicatory{Dedicated to Ron Graham on the occasion of his $80$th birthday}

\begin{abstract}
We investigate extremal problems for hypergraphs satisfying the following density condition. 
A $3$-uniform hypergraph $H=(V, E)$ is \emph{$(d, \eta,\ee)$-dense}
if for any two subsets of pairs $P$, $Q\subseteq V\times V$
the number of pairs $((x,y),(x,z))\in P\times Q$
with $\{x,y,z\}\in E$  is at least $d\,|\cK_{\ee}(P,Q)|-\eta\,|V|^3,$
where $\cK_{\ee}(P,Q)$ denotes the set of pairs in $P\times Q$ of the form $((x,y),(x,z))$. 
For a given $3$-uniform hypergraph $F$ we are interested in the infimum $d\geq 0$ 
such that for sufficiently small $\eta$ every sufficiently large $(d, \eta,\ee)$-dense
hypergraph~$H$ contains a copy of $F$ and this infimum will be denoted by $\piee(F)$.
We present a few results for the case when $F=K_k^{(3)}$ is a complete three uniform hypergraph on $k$ vertices.
It will be shown that $\piee(K_{2^r}^{(3)})\leq \frac{r-2}{r-1}$, which is sharp 
for $r=2,3,4$, where the lower bound for $r=4$ is based on
a result of Chung and Graham [\emph{Edge-colored complete graphs with precisely colored subgraphs},
Combinatorica~\textbf{3} (3-4), 315--324].
\end{abstract} 

\maketitle

\section{Introduction}  
\label{sec:intro}
\subsection{Extremal problems for uniformly dense hypergraphs}
We study extremal problems for $3$-uniform hypergraphs and if not stated otherwise by a hypergraph we always mean 
a $3$-uniform hypergraph. Recall that for a given 
$3$-uniform hypergraph~$F$ the \emph{extremal number $\ex(n, F)$} denotes the maximal number 
of hyperedges a hypergraph $H=(V,E)$ on~$n$ vertices can have without containing a copy of $F$.
Since the sequence 
$
\ex(n, F)/\binom{n}{3}
$
is decreasing, the \emph{Tur\'an density}
\[
\pi(F)=\lim_{n\to\infty}  \frac{\ex(n, F)}{\binom{n}{3}}
\]
is well defined. The study of these extremal parameters was already initiated 
by Tur\'an~\cite{Tu41} more than 70 years ago, but despite a lot of effort still
only very few results are known and several variations were considered.
In particular, Erd\H os and S\'os (see, e.g.,~\cites{ErSo82,Er90}) suggested a variant, where one restricts 
to $F$-free hypergraphs $H$, that are \emph{uniformly dense} on large subsets 
of the vertices. For reals $d\in[0,1]$ and $\eta>0$ we say a $3$-uniform hypergraph 
$H=(V,E)$ is \emph{$(d,\eta,\vvv)$-dense}, if all subsets $X$, $Y$,
$Z\subseteq V$ induce at least $d|X||Y||Z|-\eta|V|^3$ triples $(x,y,z)\in X\times Y\times Z$ 
such that $\{x,y,z\}$ is a hyperedge of $H$. Restricting to \mbox{$\vvv\,$-dense} hypergraphs, the appropriate 
Tur\'an density $\pivvv(F)$ for a given hypergraph $F$ can be defined as
\begin{multline*}
	\pivvv(F)=\sup\bigl\{d\in[0,1]\colon \text{for every $\eta>0$ and $n\in \NN$ there exists}\\
		\text{a $3$-uniform, $F$-free,  $(d,\eta, \vvv)$-dense hypergraph $H$ with $|V(H)|\geq n$}\bigr\}\,.
\end{multline*}
For $F=K_4^{(3)-}$, i.e., the $3$-uniform hypergraph with three hyperedges on four vertices, it was shown by 
Glebov, Kr{\'a}{\soft{l}}, and Volec~\cite{GKV} that $\pivvv(K_4^{(3)-})=1/4$ (see also~\cite{RRS-a}).
However, when $F=K_4^{(3)}$ is the clique on four vertices, the interesting 
conjecture asking if 
\[\pivvv(K_4^{(3)})=\frac{1}{2}\] 
remains still open.

In~\cite{RRS-b} we considered the following stronger density notion. We say a $3$-uniform hypergraph 
$H=(V,E)$ is \emph{$(d,\eta,\ev)$-dense}, if all subsets $X\subseteq V$ 
and sets of pairs $P\subseteq V\times V$ induce at least $d|X||P|-\eta|V|^3$ pairs $(x,(y,z))\in X\times P$ 
such that $\{x,y,z\}$ is a hyperedge of~$H$. Similarly as above, for this concept one defines the 
Tur\'an density $\piev(F)$ for a given hypergraph $F$ by
\begin{multline*}
	\piev(F)=\sup\bigl\{d\in[0,1]\colon \text{for every $\eta>0$ and $n\in \NN$ there exists}\\
		\text{a $3$-uniform, $F$-free,  $(d,\eta, \ev)$-dense hypergraph $H$ with $|V(H)|\geq n$}\bigr\}
\end{multline*}
and the main result in~\cite{RRS-b} asserts $\piev(K_4^{(3)})=1/2$. 
Strictly speaking, in~\cites{RRS-a,RRS-b} \emph{quasirandom} hypergraphs $H$ were considered, i.e., a matching  upper bound
on the number of hyperedges induced on $X\times Y\times Z$ (resp.\ $X\times P$) was formally required. 
However, in the proofs only the density condition (i.e., the lower bound on the number of induced hyperedges) is utilised.
Moreover, for extremal problems involving embeddings of fixed hypergraphs it seems natural to 
restrict only to a lower bound on the density
and here this path will be followed. 

\subsection{Main result}
Here we investigate the following density condition, which further strengthens the notion of 
$\ev\,$-dense hypergraphs and is in some sense the strongest non-trivial density 
condition for extremal problems in $3$-uniform hypergraphs (see, e.g.,~\cite{RRS-b} for a 
more detailed discussion of the different conditions).
\begin{dfn}
\label{eedense}
A $3$-uniform hypergraph $H=(V, E)$ on $n=|V|$ vertices is \emph{$(d, \eta,\ee)$-dense}
if for any two subsets of pairs $P$, $Q\subseteq V\times V$
the number $e_{\ee}(P,Q)$ of pairs of pairs $((x,y),(x,z))\in P\times Q$
with $\{x,y,z\}\in E$  satisfies
\begin{equation}\label{eq:defee}
	e_{\ee}(P,Q)\geq d\,|\cK_{\ee}(P,Q)| - \eta\,n^3\,,
\end{equation}
where $\cK_{\ee}(P,Q)$ denotes the set of pairs in $P\times Q$ of the form $((x,y),(x,z))$. 
The class of $(d, \eta,\ee)$-dense $3$-uniform hypergraphs will be denoted by $\cD(d,\eta,\ee)$.
\end{dfn}

The corresponding Tur\'an density for a given $3$-uniform hypergraph $F$ is then defined by
\begin{multline*}
	\piee(F)=\sup\bigl\{d\in[0,1]\colon \text{for every $\eta>0$ and $n\in \NN$ there exists}\\
		\text{a $3$-uniform, $F$-free  hypergraph $H\in\cD(d,\eta, \ee)$ with $|V(H)|\geq n$}\bigr\}\,.
\end{multline*}

Our main result establishes an upper bound on $\piee(F)$, when $F$ is a complete $3$-uniform hypergraph. 
 
\begin{thm}\label{thm:K2r}
	For every integer $r\geq 2$ and $\eps>0$ there exists an $\eta>0$ and an integer~$n_0$ such that every $3$-uniform $(\frac{r-2}{r-1}+\eps,\eta,\ee)$-dense 
	hypergraph $H$ with at least $n_0$ vertices contains a copy of the complete $3$-uniform hypergraph 
	on $2^r$ vertices $K_{2^r}^{(3)}$, i.e., we have 
	\[
		\piee(K_{2^r}^{(3)})\leq\frac{r-2}{r-1}\,.
	\]
\end{thm}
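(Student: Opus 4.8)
The plan is to proceed by induction on $r$, where the base case $r=2$ is trivial (a $(\eps,\eta,\ee)$-dense hypergraph with $\eta$ small contains a hyperedge, hence a copy of $K_4^{(3)}$ is not quite immediate — rather one starts from $r=2$ meaning density $>0$ forces $K_4^{(3)}$; in fact the honest base case is that any $(d,\eta,\ee)$-dense hypergraph with $d>0$ and $\eta$ small contains a triangle-like configuration on $4$ vertices, which one verifies directly by a short averaging argument). For the inductive step, suppose the statement holds for $r-1$ and let $H$ be $(\frac{r-2}{r-1}+\eps,\eta,\ee)$-dense on $n\geq n_0$ vertices. The key observation is that $\frac{r-2}{r-1} = 1 - \frac{1}{r-1}$, and one wants to find a vertex $v$ together with a large subset $W$ of its ``link neighbourhood'' such that the link graph of $v$ restricted to $W$ is dense enough to locate $K_{2^{r-1}}$'s, while simultaneously the hypergraph induced on $W$ remains $(\frac{r-3}{r-2}+\eps',\eta',\ee)$-dense for a slightly larger parameter, so that induction applies inside $W$ and the resulting $K_{2^{r-1}}^{(3)}$ together with $v$ — or rather, a suitable doubling of the construction — yields $K_{2^r}^{(3)}$.

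More precisely, the mechanism I would use is the following \emph{reduction lemma}: from $(d,\eta,\ee)$-density one extracts, for a typical pair $(x,y)$, that the ``codegree-type'' neighbourhood $N(x,y) = \{z : \{x,y,z\}\in E\}$ is large on average, and moreover the bipartite-like structure governed by the pair condition forces that one can pass to a subset $U$ of size $\Omega(n)$ on which $H$ is $(d',\eta',\ee)$-dense with $d' = \frac{d - c}{1-c}$ for an appropriate fraction $c$ of vertices removed — essentially a ``density increment'' obtained by restricting to the common link structure of a cleverly chosen small set. Iterating this $r-1$ times (or choosing one vertex at a time and tracking how the $\ee$-density of the surviving set evolves) drives the density past $1$ unless at some stage the set has shrunk below $n_0$, which it cannot if $\eta$ was chosen small enough relative to $\eps$; the bookkeeping is arranged so that the geometric recursion $d \mapsto \frac{d-c}{1-c}$ started at $\frac{r-2}{r-1}+\eps$ stays above the threshold $\frac{r-3}{r-2}$ needed at the next level precisely because $\frac{r-2}{r-1} = \frac{(r-3)/(r-2) - c}{1-c}$ has a solution with $c = \frac{1}{r-1} \in (0,1)$. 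At each step one also records the removed vertex (or the pair of removed vertices, to get the exponent-doubling $2^{r-1}\to 2^r$), and at the end these recorded vertices, being pairwise linked through the maintained density, span the desired clique.

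The main obstacle I anticipate is the density-increment step: unlike the $\vvv$ or $\ev$ settings, the $\ee$-condition is phrased in terms of \emph{two} sets of pairs $P,Q\subseteq V\times V$ sharing a first coordinate, so ``restricting to a link'' does not straightforwardly return an object of the same type — one must take $P$ and $Q$ to be supported on product sets $\{v\}\times A$ and $\{v\}\times B$, or more subtly on $A\times A$, and then argue that the induced $\ee$-density on the smaller vertex set is controlled. Handling the error term $\eta n^3$ correctly through the iteration (it must stay $o$ of the relevant count at every scale, which forces $\eta$ to depend on $r$ and $\eps$ in a tower-free but still delicate way) is the technical heart, and I would isolate it as a standalone lemma stating: if $H$ is $(d,\eta,\ee)$-dense then all but at most $\sqrt{\eta}\,n$ vertices $v$ have the property that the link of $v$, viewed appropriately, yields a $(d', \eta', \ee)$-dense hypergraph on $\geq (1-c-\sqrt{\eta})n$ vertices with $d' \geq \frac{d-c}{1-c} - \eps/2$. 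Granting that lemma, the rest is the clean geometric-series computation sketched above together with assembling the clique from the $\Theta(r)$ booked vertices.
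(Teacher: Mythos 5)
Your proposal pursues a regularity-free density-increment induction, which is genuinely different from the paper's route (hypergraph regularity lemma $\rightarrow$ reduced hypergraph with a pair-degree condition $\rightarrow$ an iterated Ramsey-type construction of ``fortresses'' indexed by binary trees $\rightarrow$ embedding lemma). Unfortunately the central mechanism you invoke does not exist, and the proposal has a genuine gap.

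The claimed reduction lemma -- that from $(d,\eta,\ee)$-density one can pass to a set $U$ of linear size on which $H$ is $(d',\eta',\ee)$-dense with $d'=\tfrac{d-c}{1-c}>d$ -- is not supported by the definition, and in fact the opposite is true. If $W\subseteq V$ with $|W|=cn$ and $P',Q'\subseteq W\times W$, then applying $\ee$-density of $H$ to $P'$ and $Q'$ gives the lower bound $d\,|\cK_{\ee}(P',Q')|-\eta n^3 = d\,|\cK_{\ee}(P',Q')|-(\eta/c^3)|W|^3$. So restriction at best preserves the density $d$ while inflating the error term by a factor $c^{-3}$; there is no density increment. Likewise ``the link of $v$, viewed appropriately'' is a graph, not a $3$-uniform hypergraph, and testing the $\ee$-condition with $P,Q\subseteq\{v\}\times V$ is vacuous because $|\cK_{\ee}(P,Q)|\le n^2$ is dwarfed by the error $\eta n^3$. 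The $\ee$-condition only gives useful information when $P$ and $Q$ aggregate over $\Theta(n)$ first coordinates, so restricting to a single link (or to the links of a bounded set of vertices) does not return an object of the same type with a comparable parameter. The arithmetic identity $\tfrac{r-2}{r-1}=\bigl(\tfrac{r-3}{r-2}-c\bigr)/(1-c)$ with $c=\tfrac{1}{r-1}$ is correct but carries no force without a mechanism that actually realises this recursion, and none is provided. The exponent-doubling from $2^{r-1}$ to $2^r$ is likewise only gestured at (``suitable doubling,'' ``pair of removed vertices'') without a concrete construction.

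What is actually behind the threshold $\tfrac{r-2}{r-1}$ in the paper is not a density increment at all: after regularising, one obtains a reduced hypergraph in which, for each triple of indices, almost every pair of ``left'' and ``middle'' vertices has at least a $\tfrac{r-2}{r-1}+\eps$ proportion of ``right'' extensions. The point is that $r-1$ subsets of a ground set, each of relative size $\tfrac{r-2}{r-1}+\tfrac{\eps}{2}$, must have common intersection of size at least $\tfrac{(r-1)\eps}{2}$ times the ground set; this pigeonhole step (Lemma 4.5 in the paper) is where $\tfrac{r-2}{r-1}$ enters, and it is iterated within an elaborate inductive construction on $M$-ary trees (Definition 4.4 and Proposition 4.9) that ultimately yields a $[r,2]$-system carrying a fortress, which is precisely a clique of order $2^r$ in the reduced hypergraph (Fact 4.6). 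None of this machinery, nor any substitute for it, appears in your sketch, so the proposal as written cannot be completed along the indicated lines.
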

In particular, for $r=2$ the theorem yields $\piee(K_4^{(3)})=0$ and below we discuss 
lower bound constructions, which show that Theorem~\ref{thm:K2r} is sharp for $r=3$ and $4$ as well. 
For a summary of our results for small cliques 
see~\eqref{eq:results} below. Moreover, a simple general construction given in the next section 
yields $\lim_{k\to \infty}\piee(K_k^{(3)})=1$. 

The upper bound in Theorem~\ref{thm:K2r} shows that the Tur\'an densities for cliques for $\ee\,$-dense hypergraphs 
grow much slower than those for $\ev\,$-dense hypergraphs. In fact, combined with a lower bound 
construction from~\cite{RRS-b} we have 
\[
	 \piee(K_{2^r}^{(3)}) \leq \frac{r-2}{r-1} \leq \piev(K_{r+1}^{(3)})\,.
\]

We also remark that the proof of Theorem~\ref{thm:K2r} extends to $k$-colourable hypergraphs.
Recall that a hypergraph $F$ is \emph{$k$-colourable}, if there exists a partition $V(F)=V_1\dcup\dots\dcup V_k$
such that no hyperedge of $F$ is contained in some $V_i$ for $i\in[k]$. For example, splitting 
the vertex set of $K^{(3)}_{2^r}$ into $2^{r-1}$ sets of size two shows that $K^{(3)}_{2^r}$ is $2^{r-1}$-colourable and the proof of Theorem~\ref{thm:K2r} presented here yields the same upper bound on $\piee(F)$ for any $2^{r-1}$-colourable hypergraph, i.e., if $F$ is $2^{r-1}$-colourable  for some $r\geq 2$ then
\[
	\piee(F)\leq \frac{r-2}{r-1}
\]
(see also Remark~\ref{rem:colour}).

\subsection{Lower bound constructions} In this section we consider constructions yielding lower bounds  for 
$\piee(K_k^{(3)})$ where $k=5$, $6$, and $11$. All constructions given here 
will be probabilistic, which will ensure the required $\ee\,$-denseness (see Proposition~\ref{prop:construct} below). 
For the exclusion of the cliques of given order we shall utilise Ramsey-type arguments.

The following terminology will be useful. For a given finite set $\cC$ of colours, by a \emph{colour pattern} we mean a multiset
containing three (counting with repetition) elements from $\cC$ and a set $\ccP$ of such patterns is called a
\emph{palette} over $\cC$. For example,
\begin{equation}
\label{eq:CGp}
	\ccP=\big\{\{1,1,2\}\,,\ \{1,1,3\}\,,\ \{2,2,3\}\,,\ \{2,2,1\}\,,\ \{3,3,1\}\,,\ \{3,3,2\} \big\}
\end{equation}
is a palette over $\cC=\{1,2,3\}$, which consists of all patterns using exactly two colours. 

For $d\in[0,1]$ we say a palette $\ccP$ over $\cC$ is 
\emph{$(d,\ee)$-dense}, if any pair of  (not necessarily distinct) colours of $\cC$ appears 
in at least $d\,|\cC|$ patterns of $\ccP$. For example, it is easy to check that 
the palette given  in~\eqref{eq:CGp} is $(2/3,\ee)$-dense. 

Next we describe the connection between $(d,\ee)$-dense palettes and $(d,\eta,\ee)$-dense hypergraphs.
For a vertex set $V$ and a colouring $\phi\colon V^{(2)}\to \cC$ of the (unordered) 
pairs of $V$ let $H^\ccP_\phi=(V,E)$ be the $3$-uniform hypergraph defined by 
\[
	E=\big\{\{x,y,z\}\in V^{(3)}\colon \{\phi(x,y),\phi(x,z),\phi(y,z)\}\in\ccP\big\}\,, 
\]
where $\{\phi(x,y),\phi(x,z),\phi(y,z)\}$ is regarded as a multiset. Considering random colourings~$\phi$ 
and a $(d,\ee)$-dense palette $\ccP$ results for any given $\eta>0$ 
with high probability for a sufficiently large set $V$ in a $(d,\eta,\ee)$-dense 
hypergraph $H^\ccP_\phi=(V,E)$.

\begin{prop}\label{prop:construct}
	Suppose $\ccP$ is a $(d,\ee)$-dense palette over some finite set of colours $\cC$. 
	For every $\eta>0$ and all sufficiently large sets $V$ there exists 
	a colouring $\phi\colon V^{(2)}\to\cC$ of the unordered pairs of~$V$ such that 
	the $3$-uniform hypergraph $H^{\ccP}_\phi=(V,E)$ 
	is $(d,\eta,\ee)$-dense.
	
	Moreover, if any colouring of the edges of the complete graph $K_k$ with colours from~$\cC$
	yields a triangle with a pattern not from $\ccP$, then $H^{\ccP}_\phi$ contains no copy of 
	$K_k^{(3)}$ and, consequently, $\piee(K_k^{(3)})\geq d$.
\end{prop}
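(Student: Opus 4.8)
The plan is to prove the two assertions of Proposition~\ref{prop:construct} separately, the first being a routine probabilistic deletion-type argument and the second a purely combinatorial (Ramsey-flavoured) observation.

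\smallskip

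\textbf{The density statement.}
First I would fix a $(d,\ee)$-dense palette $\ccP$ over $\cC$ and a set $V$ of size $n$, and let $\phi\colon V^{(2)}\to\cC$ be a uniformly random colouring of the unordered pairs. For a fixed pair of colour sets $P,Q\subseteq V\times V$, I would estimate $e_{\ee}(P,Q)$. The natural move is to write $e_{\ee}(P,Q)=\sum_{((x,y),(x,z))\in\cK_{\ee}(P,Q)}\mathds 1[\{x,y,z\}\in E]$ and split this sum according to whether the three underlying vertices $x,y,z$ are distinct; the pairs with a repeated vertex number at most $O(n^2)$ and contribute negligibly (for $n$ large, $n^2\ll\eta n^3$). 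For a triple of distinct vertices, once we condition on $\phi(y,z)$, the $(d,\ee)$-density of $\ccP$ says that the probability (over the independent choices of $\phi(x,y)$ and $\phi(x,z)$) that the resulting pattern lies in $\ccP$ is at least $d$, because the number of ordered pairs $(a,b)\in\cC^2$ with $\{a,b,\phi(y,z)\}\in\ccP$ is at least $d|\cC|$ --- wait, one has to be a little careful here: $(d,\ee)$-density counts \emph{patterns} (multisets) containing a given pair of colours, so I would phrase it as: for each fixed value $c=\phi(y,z)$ and each fixed first colour $a=\phi(x,y)$, the number of choices of $b=\phi(x,z)$ making $\{a,b,c\}\in\ccP$ is what the palette condition controls after summing over $a$; in any case, in expectation $\mathbb E[e_{\ee}(P,Q)]\ge d\,|\cK_{\ee}(P,Q)|-O(n^2)$. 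To turn this into a statement holding simultaneously for all $P,Q$, I would either apply a concentration inequality (the relevant random variable is a function of the $\binom n2$ independent colours, changing one of which moves $e_{\ee}$ by $O(n)$, so bounded-differences/Azuma gives deviation probability $\exp(-\Omega(\eta^2 n^4/\binom n2))=\exp(-\Omega(n^2))$) and then a union bound over the at most $2^{2n^2}$ choices of the pair $(P,Q)$ --- these beat each other for $n$ large --- or, even more simply, observe that it suffices to check the condition for $P,Q$ that are \emph{products} of vertex subsets and link-neighbourhoods, but the union-bound route is cleanest. Hence with high probability the colouring $\phi$ yields $H^{\ccP}_\phi\in\cD(d,\eta,\ee)$.

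\smallskip

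\textbf{The clique-exclusion statement.}
For the second part, suppose every $\cC$-colouring of $E(K_k)$ produces a triangle whose pattern is not in $\ccP$. I would argue by contradiction: if $H^{\ccP}_\phi$ contained a copy of $K_k^{(3)}$ on a vertex set $W$ with $|W|=k$, then restricting $\phi$ to $W^{(2)}$ gives a $\cC$-colouring of the edges of the complete graph on $W$, which by hypothesis contains a triangle $\{x,y,z\}$ with $\{\phi(x,y),\phi(x,z),\phi(y,z)\}\notin\ccP$; but then by the definition of $E$ the triple $\{x,y,z\}$ is \emph{not} a hyperedge of $H^{\ccP}_\phi$, contradicting that $W$ spans a complete $3$-uniform hypergraph. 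Combining the two parts: for every $\eta>0$ and all large $n$ there is an $F$-free member of $\cD(d,\eta,\ee)$ on $n$ vertices with $F=K_k^{(3)}$, which is exactly the defining condition witnessing $\piee(K_k^{(3)})\ge d$.

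\smallskip

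\textbf{Main obstacle.}
The only genuinely delicate point is the first part: making the in-expectation bound hold \emph{uniformly} over the doubly exponential family of test pairs $(P,Q)$. The clean resolution is the concentration-plus-union-bound estimate sketched above, for which one needs the deviation in the bounded-differences inequality to be polynomial in $n$ in the exponent (it is, since $|\cK_{\ee}(P,Q)|$ and $e_{\ee}$ are of order $n^3$ while a single colour flip changes $e_{\ee}$ by only $O(n)$), comfortably dominating the $2^{O(n^2)}$ union bound; everything else is bookkeeping. The combinatorial second half is essentially immediate from the definition of $H^{\ccP}_\phi$.
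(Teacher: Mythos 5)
Your treatment of the clique-exclusion part and of $\mathbb{E}[e_\ee(P,Q)]$ matches the paper, but the union bound you rely on for the density part does not close, and this is a genuine gap rather than bookkeeping. For a fixed pair $(P,Q)$, bounded differences gives a failure probability of the form $\exp\bigl(-c\,\eta^2 n^2\bigr)$ for an absolute constant $c$ (with $t=\eta n^3$, roughly $n^2/2$ colour variables, and Lipschitz constant $\Theta(n)$, since flipping $\phi(y,z)$ can flip all $n-2$ hyperedges $\{x,y,z\}$). The union is over $4^{n^2}=\exp\bigl((2\ln 2)\,n^2\bigr)$ pairs $(P,Q)$; for the bound to vanish you would need $c\,\eta^2>2\ln 2$, which fails for every relevant $\eta\in(0,1)$, so the two exponentials do not ``beat each other'' in your favour. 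Moreover, no sharper concentration inequality can rescue this: already for $P=Q=V\times V$ the variance of $e_\ee(P,Q)$ really is of order $n^4$ (pairs of cherries sharing an edge of $V^{(2)}$ carry positive covariance), so the deviation probability at scale $\eta n^3$ is genuinely $\exp(-\Theta(\eta^2 n^2))$ and cannot dominate a union over a family of size $2^{\Theta(n^2)}$.

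The alternative you mention in passing and dismiss is in fact the route that works, and it is the one the paper takes. One first shows, via Chernoff and a union bound over the much smaller family of size $2^{2n+|\cC|}$, that with high probability every $Y,Z\subseteq V$ and every $\cC'\subseteq\cC$ with $|\cC'|\geq d|\cC|$ satisfy: the number of $(y,z)\in Y\times Z$ with $y\neq z$ and $\phi(y,z)\in\cC'$ is at least $d|Y||Z|-\eta n^2/|\cC|^2$. For arbitrary $P,Q\subseteq V\times V$ one then decomposes $\cK_\ee(P,Q)$ \emph{deterministically} by the apex $x$ and the colours $c=\phi(x,y)$, $c'=\phi(x,z)$ of the two legs, writing $|\cK_\ee(P,Q)|=\sum_{x\in V}\sum_{c,c'\in\cC}|N^c_P(x)||N^{c'}_Q(x)|$, and applies the uniform estimate with $Y=N^c_P(x)$, $Z=N^{c'}_Q(x)$, and $\cC'=\{c''\colon\{c,c',c''\}\in\ccP\}$, which by $(d,\ee)$-density of the palette has $|\cC'|\geq d|\cC|$; summing over $x,c,c'$ gives $e_\ee(P,Q)\geq d|\cK_\ee(P,Q)|-\eta n^3$. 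The point you are missing is that the events one must control for arbitrary $P,Q$ factor through events indexed by vertex subsets and colour subsets, over which a union bound is affordable, not by subsets of $V\times V$.
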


Before we prove the proposition we deduce some lower bounds 
for $\piee(K_5^{(3)})$, $\piee(K_{6}^{(3)})$, and~$\piee(K_{11}^{(3)})$ from it.
\begin{itemize}
\item We first show $\piee(K_5^{(3)})\geq 1/3$.
For that we consider the $(1/3,\ee)$-dense palette 
	\[
		\ccP=\big\{\{1,1,2\}\,,\ \{2,2,3\}\,,\ \{3,3,1\}\big\}
	\]
	over $\{1,2,3\}$ and we have to show that the edges of the complete graph $K_5$
	cannot be coloured in such a way that all triangles get a pattern from 
	$\ccP$. We consider three cases. If there is a vertex of $K_5$ incident with edges of each colour,
	those three neighbours would have to span a triangle using all three colours, which is 
	not a pattern in $\ccP$. Similarly, if there is a vertex incident with three edges of the same colour, then 
	the triangle in that neighbourhood is monochromatic, which is also not contained in $\ccP$. Hence, every vertex has 
	degree two or zero in the three monochromatic subgraphs of $K_5$ given by the edge colouring. Therefore, the 
	colouring induces a decomposition of $K_5$ into monochromatic cycles. But since monochromatic triangles 
	are not allowed, this decomposition must consist of two cycles of length five. However, this 
	leads to two triangles with the patterns $\{a,a,b\}$ and $\{b,b,a\}$ for some $a\neq b$, but only one 
	of these patterns is in the palette.	
\item Next we verify $\piee(K_6^{(3)})\geq 1/2$.
Indeed, since every two-colouring 
	of the edges of~$K_6$ yields a monochromatic triangle, the assertion follows from the
	$(1/2,\ee)$-dense palette $\{\{1,1,2\}\,,\ \{2,2,1\}\}$
	over $\{1,2\}$. Combining this lower bound with the obvious monotonicity and with 
	Theorem~\ref{thm:K2r} for $r=3$ leads to
	\[
		\frac{1}{2}\leq \piee(K_6^{(3)}) \leq \piee(K_7^{(3)}) \leq \piee(K_8^{(3)}) \leq \frac{1}{2}\,.
	\]
	
	We also remark that this construction can be generalised for multicolour Ramsey numbers.
	For every integer $\l$ let $k=R(3;\l)$ be the $\l$-colour Ramsey number for the triangle. 
	Then the palette consisting of all but the monochromatic patterns over $[\l]=\{1,\dots,\l\}$ yields
	$\piee(K_k^{(3)})\geq (\l-1)/\l$, which shows that
	$\piee(K_k^{(3)})\to1$	
	as~$k$ tends to infinity. 
\item The last construction establishes $\piee(K_{11}^{(3)})\geq 2/3$.
For this we appeal to the palette given in~\eqref{eq:CGp}, which is 
$(2/3,\ee)$-dense over $\{1,2,3\}$. It follows from the result of Chung and Graham from~\cite{CG83} that
every three-colouring of the edges of $K_{11}$ yields either a monochromatic triangle or a rainbow triangle. Since these 
patterns are not in the palette in~\eqref{eq:CGp}, it follows that $\piee(K_{11}^{(3)})\geq 2/3$. Together with 
the case $r=4$ of Theorem~\ref{thm:K2r} this yields
	\[
		\frac{2}{3}\leq \piee(K_{11}^{(3)}) \leq \dots \leq \piee(K_{16}^{(3)}) \leq \frac{2}{3}\,.
	\]
\end{itemize}

Summarising the discussion above for cliques of small size we established
\begin{eqnarray}
	\piee(K_4^{(3)}) = &0&	\nonumber\\
	&\frac{1}{3}&\leq \piee(K_5^{(3)})	\nonumber\\
	\piee(K_6^{(3)})
	=
	\piee(K_7^{(3)})
	=
	\piee(K_8^{(3)})
	=
	&\frac{1}{2}&
	\leq
	\piee(K_9^{(3)})
	\leq 
	\piee(K_{10}^{(3)})	\label{eq:results}\\
	\piee(K_{11}^{(3)})
	=
	\dots
	=
	\piee(K_{16}^{(3)})
	=
	&\frac{2}{3}
	\nonumber
\end{eqnarray}
which leaves gaps for the values of $\piee(K_k^{(3)})$ for $k=5$, $9$, $10$ and it would be very interesting to 
close these.
We conclude this introduction with the short proof of Proposition~\ref{prop:construct}.
\begin{proof}[Proof of Proposition~\ref{prop:construct}]
	For a given $(d,\ee)$-dense palette $\ccP$ over some finite set of colours~$\cC$ and 
	$\eta>0$ we consider a random colouring $\phi\colon V^{(2)}\to\cC$ of the unordered pairs of
	some sufficiently large set~$V$, where each pair is coloured independently and uniformly 
	with one of the colours from~$\cC$. We shall show that with probability tending to one as $|V|\to\infty$
	the hypergraph~$H^{\ccP}_{\phi}$ is $(d,\eta,\ee)$-dense.
	
	We begin with the following observation. For any two subsets $Y$, $Z\subseteq V$
	and any selection $\cC'\subseteq\cC$ of at least $d\,|\cC|$ colours we expect at least 
	$d(|Y||Z|-|Y\cap Z|)$ pairs $(y,z)\in Y\times Z$ with $y\neq z$ such that 
	$\phi(y,z)\in\cC'$. Chernoff's inequality in the form
	\[
		\PP(X<\mathbb{E}X-t)\leq \exp(-\tfrac{t^2}{2\mathbb{E}X})
	\]
	applied with $t=\eta |V|^2/|\cC|^2-|Y\cap Z|$ shows that at least
	\begin{equation}\label{eq:c1}
		d|Y||Z|-\eta|V|^2/|\cC|^2
	\end{equation}
	such pairs will be present with probability 
	at least $1-\exp(-\tfrac{\eta^2|V|^2}{3|\cC|^4})$. Consequently, applying the union bound 
	over all choices of $Y$, $Z\subseteq V$ and $\cC'\subseteq \cC$ we infer that 
	with probability tending to 1 as $|V|\to\infty$ that the bound in~\eqref{eq:c1} 
	holds for all these 
	choices and for the rest of the proof we assume that $\phi$ satisfies this 
	property.
	
	Let $P$, $Q\subseteq V\times V$. For a colour $c\in\cC$ and a vertex $x\in V$ set 
	\[
		N^c_{P}(x)=\bigl\{y\in V\colon (x,y)\in P\tand \phi(x,y)=c\bigr\}
	\] 
	and, similarly, define $N^c_{Q}(x)$. Clearly, we have 
	\[
		|\cK_{\ee}(P,Q)|=\sum_{x\in V}\sum_{c,c'\in\cC}|N^c_{P}(x)||N^{c'}_{Q}(x)|\,.
	\]
	Since the palette $\ccP$ is $(d,\ee)$-dense, for any (not necessarily distinct) colours 
	$c$, $c'\in\cC$ the set $\cC'=\{c''\in\cC\colon \{c,c',c''\}\in\ccP\}$
	has size at least $d|\cC|$. Hence, using the lower bound given in~\eqref{eq:c1}
	for $Y=N^c_{P}(x)$ and $Z=N^{c'}_{Q}(x)$
	yields that there are at least
	\[
		d|N^c_{P}(x)||N^{c'}_{Q}(x)|-\eta|V|^2/|\cC|^2
	\]
	triples $(x,y,z)$ with $(x,y)\in P$, $\phi(x,y)=c$, $(x,z)\in Q$, $\phi(x,z)=c'$, and $\{x,y,z\}\in E(H_\phi^{\ccP})$.
	Summing this estimate over all vertices $x\in V$ and colours $c$, $c'\in\cC$
	leads to 
	\begin{align*}
		e_{\ee}(P,Q)&\geq \sum_{x\in V}\sum_{c,c'\in\cC}\Big(d|N^c_{P}(x)||N^{c'}_{Q}(x)|- \eta|V|^2/|\cC|^2\Big) \\
		&=d\sum_{x\in V}\sum_{c,c'\in\cC}|N^c_{P}(x)||N^{c'}_{Q}(x)|-\eta|V|^3\\
		&=d|\cK_{\ee}(P,Q)|-\eta|V|^3\,,
	\end{align*}
	which shows that $H_\phi^\ccP$ is $(d,\eta,\ee)$-dense.
		
	The moreover-part follows directly from the assumed Ramsey-type property of $K_k$ and the 
	definitions of $H^{\ccP}_{\phi}$ and $\piee(\cdot)$. 
\end{proof}

\section{Hypergraph regularity method}\label{sec:regmethod}
A key tool in the proof of Theorem~\ref{thm:K2r} is the regularity lemma for $3$-uniform hypergraphs. 
We follow the approach from~\cites{RoSchRL,RoSchCL} combined with the results from~\cite{Gow06} and~\cite{NPRS09}
and below we introduce the necessary notation.

For two disjoint sets $X$ and $Y$ we denote by $K(X,Y)$ the complete bipartite graph with that vertex partition.
We say a bipartite graph $P=(X\dcup Y,E)$ is \emph{$(\delta_2, d_2)$-regular} if for all subsets 
$X'\subseteq X$ and $Y'\subseteq Y$ we have 
\[
	\big|e(X',Y')-d_2|X'||Y'|\big|\leq \delta_2 |X||Y|\,,
\]
where $e(X',Y')$ denotes the number of edges of $P$ with one vertex in $X'$ and one vertex in~$Y'$.
Moreover, for $k\geq 2$ we say a $k$-partite graph $P=(X_1\dcup \dots\dcup X_k,E)$ is $(\delta_2, d_2)$-regular, 
if all of its  $\binom{k}{2}$ naturally 
induced bipartite subgraphs $P[X_i,X_j]$ are $(\delta_2, d_2)$-regular. 
For a tripartite graph $P=(X\dcup Y\dcup Z,E)$
we denote by $\cK_3(P)$ the triples of vertices spanning a triangle in~$P$, i.e., 
\[
	\cK_3(P)=\{\{x,y,z\}\subseteq X\cup Y\cup Z\colon xy, xz, yz\in E\}\,.
\]
If the tripartite graph $P$ is $(\delta_2, d_2)$-regular, then the so-called \emph{triangle counting lemma}
implies
\begin{equation}
	\label{eq:TCL}
		|\cK_3(P)|\leq d_2^3|X||Y||Z|+3\delta_2|X||Y||Z|\,.
\end{equation}

We say a $3$-uniform hypergraph $H=(V,E_H)$ is regular with respect to a tripartite graph~$P$ if it matches 
approximately
the same proportion of triangles for every subgraph $Q\subseteq P$. This we make precise in the following definition.

\begin{dfn}
\label{def:reg}
A $3$-uniform hypergraph $H=(V,E_H)$ is \emph{$(\delta_3,d_3)$-regular with respect to 
a tripartite graph $P=(X\dcup Y\dcup Z,E_P)$} 
with $V\supseteq  X\cup Y\cup Z$ if for every tripartite subgraph $Q\subseteq P$ we have 
\[
	\big||E_H\cap\cK_3(Q)|-d_3|\cK_3(Q)|\big|\leq \delta_3|\cK_3(P)|\,.
\]
Moreover, we simply say \emph{$H$ is $\delta_3$-regular with respect to $P$}, if it is $(\delta_3,d_3)$-regular for some $d_3\geq 0$.
We also define the \emph{relative density} of $H$ with respect to $P$ by
\[
	d(H|P)=\frac{|E_H\cap\cK_3(P)|}{|\cK_3(P)|}\,,
\]
where we use the convention $d(H|P)=0$ if $\cK_3(P)=\emptyset$. If $H$ is not $\delta_3$-regular with respect to $P$, then we simply refer to it as \emph{$\delta_3$-irregular}.
\end{dfn}

The regularity lemma for $3$-uniform hypergraphs, introduced by Frankl and R\"odl in~\cite{FR}, provides for 
every hypergraph $H$ a partition of its vertex set and a partition of the edge sets of the complete bipartite 
graphs induced by the vertex partition such that for appropriate constants $\delta_3$, $\delta_2$, and $d_2$ 
\begin{enumerate}[label=\nlabel]
	\item the bipartite graphs given by the partitions are $(\delta_2,d_2)$-regular and
	\item $H$ is $\delta_3$-regular with respect to ``most'' tripartite graphs $P$ given by the partition.
\end{enumerate}
In many proofs based on the regularity method it is
convenient to ``clean'' the regular partition provided by the regularity lemma. In particular, 
we shall disregard hyperedges of~$H$ that belong to $\cK_3(P)$ when $H$ is not $\delta_3$-regular or 
when $d(H|P)$ is very small. These properties are rendered in the following somewhat standard
corollary of the regularity lemma.

\begin{thm}
	\label{thm:TuRL}
	For every $d_3>0$, $\delta_3>0$ and  $m\in\NN$, and every function $\delta_2\colon \NN \to (0,1]$,
	there exist integers~$T_0$ and $n_0$ such that for every $n\geq n_0$
	and every $n$-vertex $3$-uniform hypergraph $H=(V,E)$ the following holds.
	
	There exists a subhypergraph $\hat H=(\hat V,\hat E)\subseteq H$, an integer $\l\leq T_0$,
	a vertex partition $V_1\dcup\dots\dcup V_m=\hat V$, 
	and for all $1\leq i<j\leq m$ there exists 
	a partition 
	\[
		\cP^{ij}=\{P^{ij}_\alpha=(V_i\dcup V_j,E^{ij}_\alpha)\colon 1\leq \alpha \leq \l\}
	\] 
	of $K(V_i,V_j)$ satisfying the following properties
	\begin{enumerate}[label=\rmlabel]
		\item\label{TuRL:1} $|V_1|=\dots=|V_m|\geq (1-\delta_3)n/T_0$,
		\item\label{TuRL:2} for every $1\leq i<j\leq m$ and $\alpha\in [\l]$ the bipartite graph $P^{ij}_\alpha$ is $(\delta_2(\l),1/\l)$-regular,
		\item $\hat H$ is $\delta_3$-regular with respect to $P^{ijk}_{\alpha\beta\gamma}$
			for all tripartite graphs (which will be later referred to as triads)
			\begin{equation}\label{eq:triad}
				P^{ijk}_{\alpha\beta\gamma}=P^{ij}_\alpha\dcup P^{ik}_\beta\dcup P^{jk}_\gamma=(V_i\dcup V_j\dcup V_k, E^{ij}_\alpha\dcup E^{ik}_{\beta}\dcup E^{jk}_{\gamma})\,,
			\end{equation}
			with $1\leq i<j<k\leq m$ and $\alpha$, $\beta$, $\gamma\in[\l]$, where the density $d(\hat H|P^{ijk}_{\alpha\beta\gamma})$ is 
			either $0$ or at least $d_3$, and			
		\item\label{TuRL:4} for every $1\leq i<j<k\leq m$ 												there are at most $\delta_3\,\ell^3$ triples $(\alpha, \beta, \gamma)\in[\ell]^3$
			such that $d(\hat H|P^{ijk}_{\alpha\beta\gamma})<d(H|P^{ijk}_{\alpha\beta\gamma})-d_3$.
	\end{enumerate}
\end{thm}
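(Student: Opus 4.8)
\textbf{Proof proposal for Theorem~\ref{thm:TuRL}.}

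The plan is to derive this cleaned-up version from the ``raw'' Frankl--R\"odl regularity lemma for $3$-uniform hypergraphs (together with the refinements from~\cites{RoSchRL,RoSchCL,Gow06,NPRS09}, which supply a form of the lemma where one first fixes the target vertex partition size~$m$ and an index bound, and obtains a polyad partition with the index~$\l$ bounded by some~$T_0$ depending only on $m$, $d_3$, $\delta_3$, and the function~$\delta_2$). First I would apply that regularity lemma to $H$ with suitably chosen input parameters to obtain a vertex partition $V_0\dcup V_1\dcup\dots\dcup V_m$ with an exceptional class~$V_0$ of bounded relative size, equitable classes $V_1,\dots,V_m$, and for every pair $1\le i<j\le m$ a partition $\cP^{ij}$ of $K(V_i,V_j)$ into $\l$ bipartite graphs, each $(\delta_2(\l),1/\l)$-regular, such that $H$ is $\delta_3$-regular with respect to all but at most $\delta_3\l^3$ of the triads $P^{ijk}_{\alpha\beta\gamma}$ for each fixed triple $i<j<k$. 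Folding the exceptional vertices into, say, $V_1$ and relabelling (or, more cleanly, arranging from the outset that $m\mid n$ up to a negligible error and absorbing the leftover vertices) yields property~\ref{TuRL:1}, at the cost of adjusting constants; property~\ref{TuRL:2} is immediate from the conclusion of the lemma.

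The heart of the matter is to pass from~$H$ to the subhypergraph $\hat H$ and to establish~(iii) and~\ref{TuRL:4}. Here I would define $\hat H$ by deleting from~$H$ every hyperedge $\{x,y,z\}$ that lies in $\cK_3(P^{ijk}_{\alpha\beta\gamma})$ for a triad on which $H$ is $\delta_3$-irregular, and every hyperedge lying in $\cK_3(P^{ijk}_{\alpha\beta\gamma})$ for a triad with $d(H\,|\,P^{ijk}_{\alpha\beta\gamma})<d_3$; set $\hat V=V$ (no vertices are removed, beyond the cosmetic relabelling of~$V_0$). Two things must then be checked. First, $\hat H$ should still be $\delta_3$-regular (with a slightly worse constant, which one hides by feeding a smaller $\delta_3$ into the raw lemma) with respect to every triad that survives, i.e.\ every triad that was originally $\delta_3$-regular for~$H$ with density at least $d_3$: on such a triad no hyperedges were removed, so $d(\hat H\,|\,P^{ijk}_{\alpha\beta\gamma})=d(H\,|\,P^{ijk}_{\alpha\beta\gamma})\ge d_3$ and $\delta_3$-regularity is inherited verbatim; on a triad that was irregular or of density $<d_3$ for~$H$ we have removed \emph{all} of $E_H\cap\cK_3$, so $d(\hat H\,|\,P^{ijk}_{\alpha\beta\gamma})=0$ and the regularity condition holds trivially --- this gives the dichotomy ``$0$ or at least $d_3$'' in~(iii). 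Second, for~\ref{TuRL:4}: fix $i<j<k$; for a triad to satisfy $d(\hat H\,|\,P)<d(H\,|\,P)-d_3$ the hyperedge set must actually have shrunk on that triad, which by construction happens only for the irregular triads and the low-density ($<d_3$) triads. The irregular triads number at most $\delta_3\l^3$ by the lemma. The low-density triads are \emph{not} covered by that count, but on each of them $d(H\,|\,P)<d_3$, hence $d(H\,|\,P)-d_3<0\le d(\hat H\,|\,P)$, so such a triad does \emph{not} violate the inequality in~\ref{TuRL:4} at all. Consequently the set of triples $(\alpha,\beta,\gamma)$ with $d(\hat H\,|\,P^{ijk}_{\alpha\beta\gamma})<d(H\,|\,P^{ijk}_{\alpha\beta\gamma})-d_3$ is contained in the set of irregular triads, which has size at most $\delta_3\l^3$, and~\ref{TuRL:4} follows.

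The main obstacle is purely bookkeeping: matching the parameter hierarchy of the ``off-the-shelf'' regularity lemma (which typically comes with its own exceptional-set size, its own lower bound on $|V_i|$, and a possibly different but equivalent notion of $(\delta_2,d_2)$-regularity, e.g.\ ``on at least $(1-\delta_2)$-fraction of pairs'') to the clean statement asked for here, and choosing the input $\delta_3$, $d_3$, $m$, $\delta_2(\cdot)$ slightly smaller/larger so that after the edge-deletion step the stated constants $\delta_3$, $d_3$, $\delta_2(\l)$, and the bound $|V_i|\ge(1-\delta_3)n/T_0$ all come out exactly as written. None of this requires a new idea; the only genuinely substantive points are the two observations above --- that deleting \emph{all} hyperedges on a bad triad forces relative density~$0$ and hence trivial regularity, and that low-density triads can never decrease density below $d(H\,|\,P)-d_3$ --- and these have already been isolated. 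I would therefore present the argument as: invoke the regularity lemma of~\cites{RoSchRL,RoSchCL} (with the counting/regularity refinements of~\cites{Gow06,NPRS09}), perform the two-stage hyperedge cleaning, verify~(iii) and~\ref{TuRL:4} as above, and absorb $V_0$ to get~\ref{TuRL:1}--\ref{TuRL:2}.
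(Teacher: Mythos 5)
Your proposal is correct and takes essentially the same route as the paper: the paper too derives Theorem~\ref{thm:TuRL} from the cleaned-up regularity lemma of~\cite{RRS-a}*{Corollary 3.3} (itself built on~\cite{RoSchRL}), and justifies property~\ref{TuRL:4} by the very observation you isolate, namely that one removes more than a $d_3$-fraction of the triangles of a triad only when $H$ is $\delta_3$-irregular there, while on low-density triads $d(H\,|\,P)-d_3<0\le d(\hat H\,|\,P)$ holds automatically.
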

The standard proof of Theorem~\ref{thm:TuRL} based on a refined version of 
the regularity lemma from~\cite{RoSchRL}*{Theorem~2.3}
can be found in~\cite{RRS-a}*{Corollary 3.3}. Actually the statement there differs from the one given here in the final clause, but the proof from~\cite{RRS-a} shows the present version as well. In fact, the new version of \ref{TuRL:4} is a consequence of clause (a) in the definition of the hypergraph $R$ in~\cite{RRS-a}*{Proof of Corollary 3.3}, as we only remove more than~$d_3|\cK(\cP^{ijk}_{\alpha\beta\gamma})|$ hyperedges from $H$ to obtain $\hat H$, when $H$ is $\delta_3$-irregular with respect to~$\cP^{ijk}_{\alpha\beta\gamma}$.

We shall use a so-called \emph{counting/embedding lemma}, which allows us to embed hypergraphs of fixed isomorphism type 
into appropriate and sufficiently regular and dense triads of the partition provided by Theorem~\ref{thm:TuRL}. 
The following statement is a direct consequence of~\cite{NPRS09}*{Corollary~2.3}.

\begin{thm}[Embedding Lemma]
	\label{thm:EL}
	For every $3$-uniform hypergraph $F=(V_F,E_F)$ with vertex set $V_F=[f]$
	and every $d_3>0$ there exists $\delta_3>0$, and functions 
	$\delta_2\colon \NN\to(0,1]$ and  $N\colon \NN\to\NN$
	such that the following holds for every $\l\in\NN$.

	Suppose $P=(V_1\dcup\dots\dcup V_f, E_P)$ is a $(\delta_2(\l),\frac{1}{\l})$-regular, $f$-partite graph
	with vertex classes satisfying  $|V_1|=\dots=|V_f|\geq N(\l)$ and suppose $H$ is an $f$-partite, $3$-uniform hypergraph
	such that for every edge $ijk\in E_F$ we have
	\begin{enumerate}[label=\alabel]
		\item\label{EL:a} $H$ is $\delta_3$-regular with respect to to the tripartite subgraph $P[V_i\dcup V_j\dcup V_k]$ and 
		\item\label{EL:b} $d(H|P[V_i\dcup V_j\dcup V_k])\geq d_3$
	\end{enumerate} 
	then $H$ contains a copy of $F$, where for every $i\in [f]=V_F$ the image of $i$ is contained in~$V_i$.
\end{thm}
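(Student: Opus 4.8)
The plan is to obtain Theorem~\ref{thm:EL} directly from the dense counting lemma for $3$-uniform hypergraphs, \cite{NPRS09}*{Corollary~2.3}, which is tailored to exactly this situation. First I would reduce to the case that $F$ has no isolated vertices — an isolated vertex is embedded last, into an as yet unused vertex of its class, which only worsens $N$ by a constant — and note that a pair $ij\subseteq V_F$ not contained in any hyperedge of $F$ imposes no constraint, so the only bipartite graphs that matter are the $P[V_i\dcup V_j]$ with $ij$ lying in some hyperedge of $F$. The ``regular complex'' formed by these bipartite graphs together with $H$ restricted to the triads $P[V_i\dcup V_j\dcup V_k]$, $ijk\in E_F$, is precisely the input required by \cite{NPRS09}*{Corollary~2.3}: for the given $F$ and $d_3$ that result supplies $\delta_3>0$ and functions $\delta_2,N\colon\NN\to\NN$ so that, for every $\l$, the number of maps $\psi\colon V_F\to V_1\cup\dots\cup V_f$ with $\psi(i)\in V_i$ for every $i$, with $\psi(i)\psi(j)\in E_P$ whenever $ij$ lies in a hyperedge of $F$, and with $\{\psi(i),\psi(j),\psi(k)\}\in E_H$ for every $ijk\in E_F$, is at least
\[
	\Bigl(\prod_{ijk\in E_F}d(H|P[V_i\dcup V_j\dcup V_k])\ -\ o(1)\Bigr)\,\l^{-s}\prod_{i=1}^{f}|V_i|\,,
\]
where $s$ is the number of pairs contained in a hyperedge of $F$ and the $o(1)$ is controlled by $\delta_2(\l)$, $\delta_3$, and $1/N(\l)$. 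Since the classes $V_1,\dots,V_f$ are disjoint, such a $\psi$ is automatically injective, hence a copy of $F$ of the desired partite form, and it exists as soon as the displayed quantity is positive.

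The only hypothesis that needs reconciling is that Theorem~\ref{thm:EL} provides merely the lower bounds $d(H|P[V_i\dcup V_j\dcup V_k])\geq d_3$, not prescribed relative densities. This is harmless: the $\delta_3$-regularity assumption \ref{EL:a} is by definition stated relative to the \emph{actual} relative densities, so one runs the counting lemma with those densities as parameters, whence the main term of the display is at least $d_3^{\,|E_F|}\,\l^{-s}\prod|V_i|$. It then suffices to choose $\delta_3$ small enough in terms of $F$ and $d_3$ alone, and $\delta_2(\l)$ small and $N(\l)$ large in terms of $\l$, $F$ and $d_3$, so that the $o(1)$ falls below $\tfrac12 d_3^{\,|E_F|}$; these are exactly the quantitative dependencies packaged in \cite{NPRS09}*{Corollary~2.3}. (The $\l$-dependence of $\delta_2$ and $N$ comes from counting the ``shadow'' of $F$ inside the density-$1/\l$ graph $P$ via the graph counting lemma, whereas the hypergraph-regularity contribution to the error stays at the scale $\l^{-s}\prod|V_i|$ of the main term, which is why $\delta_3$ can be chosen independently of $\l$.) The order of quantifiers in Theorem~\ref{thm:EL} — $\delta_3$ first, then the functions $\delta_2(\cdot),N(\cdot)$, then ``for every $\l$'' — matches the output of the counting lemma verbatim, so no reshuffling is required.

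The substantive content is entirely contained in the cited counting lemma: that $\delta_3$-regularity of $H$ with respect to the triads — an assumption that a priori only controls how many hyperedges of $H$ lie over subgraphs of a given triad — nonetheless forces the asymptotically expected number of copies of an arbitrary fixed $F$. This ``counting'' direction of the hypergraph regularity method is the genuine difficulty, and I would invoke it as a black box, exactly as the excerpt signals by citing \cite{NPRS09}*{Corollary~2.3}. Everything in the two preceding paragraphs is routine parameter management.
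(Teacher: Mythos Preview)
Your proposal is correct and matches the paper's treatment: the paper does not give a proof but simply states that the Embedding Lemma is ``a direct consequence of~\cite{NPRS09}*{Corollary~2.3}'', which is precisely the black box you invoke. Your write-up merely spells out the routine parameter management behind that citation.
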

In an application of Theorem~\ref{thm:EL} the tripartite graphs $P[V_i\dcup V_j\dcup V_k]$ in~\ref{EL:a} and~\ref{EL:b}
will be given by triads  $P^{ijk}_{\alpha\beta\gamma}$ from the partition given by Theorem~\ref{thm:TuRL}.

For the proof of Theorem~\ref{thm:K2r} we consider a $\ee\,$-dense hypergraph~$H$ of density $\frac{r-2}{r-1}+\eps$.
We will apply the regularity lemma in the form of Theorem~\ref{thm:TuRL} to~$H$. The main part of the proof
concerns the appropriate selection of dense and regular triads, that are ready for an application of the embedding lemma
with $F=K^{(3)}_{2^r}$. This will be the focus in Sections~\ref{sec:K2r} and~\ref{sec:embred}.

\section{Reduced hypergraphs}
\label{sec:K2r}

Like many other proofs based on the regularity method, the proof of Theorem~\ref{thm:K2r} 
will factor naturally
through an auxiliary statement speaking about certain ``reduced hypergraphs'' that we would 
like to describe next. 

Consider any finite set of indices $I$, suppose that associated with any two distinct 
indices $i, j\in I$ we have a finite nonempty set of vertices $\cP^{ij}=\cP^{ji}$, and that for 
distinct pairs of indices the corresponding vertex classes are disjoint. Assume further 
that for any three distinct indices $i, j,k\in I$ we are given a tripartite $3$-uniform 
hypergraph $\cA^{ijk}$ with vertex classes $\cP^{ij}$, $\cP^{ik}$, and $\cP^{jk}$. Under 
such circumstances we call the $\binom{|I|}{2}$-partite $3$-uniform hypergraph $\cA$ 
defined by
\[
V(\cA)=\bigdcup_{\{i,j\}\in I^{(2)}} \cP^{ij}
\qquad \text{ and } \qquad
E(\cA)=\bigdcup_{\{i,j, k\}\in I^{(3)}} E(\cA^{ijk})
\]
a {\it reduced hypergraph}. We also refer to $I$ as the {\it index set} of $\cA$, to the sets 
$\cP^{ij}$ as the {\it vertex classes} of $\cA$, and to the hypergraphs $\cA^{ijk}$ as the 
{\it constituents} of $\cA$. 

This concept of a reduced hypergraph might look a bit artificial at first, especially since only
$\binom{|I|}{3}$ out of the $\binom{\binom{|I|}{2}}{3}$ naturally induced tripartite 
subhypergraphs are inhabited. However, as it turns out these reduced hypergraphs 
are well suited for analyzing the structure of the partition provided by
Theorem~\ref{thm:TuRL} applied to a given hypergraph $H$.     

Now, when $H$ happens to be $(d, \eta, \ee)$-dense, then the corresponding reduced hypergraph~$\cA$ inherits a property reflecting this. We are thus led to the notion of 
a reduced hypergraph $\cA$ being $(d, \delta, \ee)$-dense. 
Roughly speaking, this means that all constituents of~$\cA$ are required to satisfy a
$\delta$-approximate pair-degree condition with proportion $d$, which is rendered in the following definition.

\begin{dfn}\label{def:denseA}
A reduced hypergraph $\cA$ with index set $I$ is 
\emph{$(d, \delta, \ee)$-dense} for some $d\in [0,1]$ and $\delta>0$,
if for any three distinct $i, j, k\in I$ the following is true: 

There are at most $\delta\,|\cP^{ij}|\,|\cP^{ik}|$ pairs of vertices 
$(P^{ij}, P^{ik}) \in \cP^{ij} \times \cP^{ik}$
with the property that there are fewer than $d\,|\cP^{jk}|$ vertices $P^{jk}\in \cP^{jk}$ 
for which $\{P^{ij}, P^{ik}, P^{jk}\}\in E(\cA^{ijk})$ holds.
\end{dfn}

For an integer $t\ge 3$ we say that a reduced hypergraph $\cA$ {\it contains a clique of 
order~$t$} if there are
\begin{enumerate}
\item[$\bullet$] 
a set $J\subseteq I$ with $|J|=t$
\item[$\bullet$]
and for any two distinct indices $i,j \in J$ a vertex $P^{ij}\in \cP^{ij}$
\end{enumerate}
such that we have 
$\{P^{ij}, P^{ik}, P^{jk}\}\in E(\cA^{ijk})$ for any three distinct $i,j,k\in J$. 

Now the statement to which we may reduce Theorem~\ref{thm:K2r} via the regularity method
is the following.

\begin{prop}\label{lem:reduced}
Given an integer $r\ge 2$ and a real $\eps>0$, there exists a real $\delta>0$ and an integer $m$
such that every $\bigl(\frac{r-2}{r-1}+\eps, \delta, \ee\bigr)$-dense reduced hypergraph 
whose index set has size at least $m$ contains a clique of order $2^r$.
\end{prop}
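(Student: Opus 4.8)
The plan is to prove Proposition~\ref{lem:reduced} by induction on $r$, but with a stronger ``rooted'' statement that keeps track of a partial clique together with a reservoir of index-vertices over which the clique can still be extended. For the base case $r=2$ we need that every $(\eps,\delta,\ee)$-dense reduced hypergraph with sufficiently large index set contains a clique of order $4$; this is essentially an averaging/pigeonhole argument. For the inductive step, the key idea is the following \emph{doubling} mechanism: if we have already located a clique on an index set $J_0$ of size $2^{r-1}$ that can be ``simultaneously extended'' in a dense way over a large set of further indices, then inside that extension structure we will find a \emph{second} disjoint clique of order $2^{r-1}$, and the density hypotheses will force every cross-triple between the two halves (i.e.\ triples with one or two vertices on each side) to lie in the appropriate constituent, yielding a clique of order $2^r$. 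The gain of a factor $2$ per step is exactly what produces $2^r$, and the loss in the density parameter from $\frac{r-3}{r-2}$ to $\frac{r-2}{r-1}$ is what makes the cross-triples unavoidable.

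More concretely, I would set up the induction so that the hypothesis at level $r$ produces, in any $(\frac{r-2}{r-1}+\eps,\delta,\ee)$-dense reduced hypergraph with index set of size $\ge m_r$, not merely a clique of order $2^r$ but a clique on some $J\subseteq I$ of size $2^r$ \emph{together with} a set $I^\star\subseteq I\setminus J$ of linear size and, for each $i\in J$ and each ``slot'' $\{i,i'\}$ with $i'\in I^\star$, a vertex of $\cP^{ii'}$ chosen so that all the relevant constituent-membership relations already hold; in other words, the clique ``sees'' $I^\star$ in a prescribed, coherent way. The step from $r-1$ to $r$ then runs as follows. Apply the level-$(r-1)$ statement to get a first block $J_1$ of size $2^{r-1}$ with its reservoir $I_1^\star$. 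Restrict attention to the sub-reduced-hypergraph indexed by $I_1^\star$ (its density parameter is unchanged, since being $(d,\delta,\ee)$-dense is inherited by any index subset), and apply level $r-1$ again inside $I_1^\star$ to obtain a second block $J_2\subseteq I_1^\star$ of size $2^{r-1}$, disjoint from $J_1$. It remains to (a) pick, for each pair $\{i,j\}$ with $i\in J_1$, $j\in J_2$, a vertex $P^{ij}$ that is consistent with both blocks, and (b) verify every cross-triple.

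The main obstacle — and the place where the quantitative hypothesis is really used — is step (a)/(b): one must choose the $\le 2^{r-1}\cdot 2^{r-1}$ cross-vertices and simultaneously guarantee, for each triple $\{i,j,k\}$ with two indices in one block and one in the other, that $\{P^{ij},P^{ik},P^{jk}\}\in E(\cA^{ijk})$. For a triple of ``type $(2,1)$'' with $i,j\in J_1$, $k\in J_2$, the definition of $(d,\delta,\ee)$-dense says that all but a $\delta$-fraction of pairs $(P^{ij},P^{ik})$ have $\ge d\,|\cP^{jk}|$ good completions $P^{jk}$; the subtlety is that $P^{jk}$ must be the \emph{same} vertex used for all triples $\{\,\cdot\,,j,k\}$. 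I would handle this by a careful greedy/union-bound argument over the bounded number of slots: process the cross-pairs in a fixed order, at each stage discarding the at most $\delta$-fraction of ``bad'' choices and using $d=\frac{r-2}{r-1}+\eps>\tfrac12$ (for $r\ge 3$) to ensure the surviving good sets for the two incident blocks always intersect — this is the step that forces $d>1-\frac{1}{2^{?}}$ to be replaced by the sharper threshold coming from how many constraints meet at a vertex class, and getting the bookkeeping to close with exactly $\frac{r-2}{r-1}$ rather than something weaker is the delicate point. The parameters $\delta$ and $m$ are chosen at the end by unwinding the finitely many applications of the level-$(r-1)$ statement and the greedy selection, with $\delta$ small relative to $\eps$ and the reservoir sizes shrinking by a controlled constant factor at each of the $r$ levels.
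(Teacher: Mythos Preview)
Your doubling strategy has a genuine gap at the gluing step, precisely where you flag the ``delicate point.'' Suppose you have found $J_1$ together with its clique vertices $P^{ii'}$ and a reservoir $I_1^\star$ carrying cross-vertices $P^{ij}$ (for $i\in J_1$, $j\in I_1^\star$) so that every type-$(2,1)$ triple is already an edge. Now apply the level-$(r-1)$ hypothesis inside $I_1^\star$ to obtain $J_2$ with its own clique vertices $P^{jj'}$. Consider a type-$(1,2)$ triple $\{i,j,j'\}$ with $i\in J_1$ and $j,j'\in J_2$: you need $\{P^{ij},P^{ij'},P^{jj'}\}\in E(\cA)$, but $P^{ij},P^{ij'}$ were fixed by the reservoir of $J_1$ while $P^{jj'}$ was fixed by the second application of the inductive hypothesis, and nothing links them. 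If instead you postpone the choice of $P^{jj'}$ and try to place it in the common neighbourhood of the $|J_1|=2^{r-1}$ prescribed pairs $(P^{ij},P^{ij'})$, the intersection bound requires density exceeding $1-2^{-(r-1)}$, not $\tfrac{r-2}{r-1}$. Your remark that $d>\tfrac12$ lets ``the two incident blocks intersect'' accounts for at most two constraints per vertex class, whereas each $P^{jj'}$ genuinely carries $2^{r-1}$ such constraints. No greedy ordering of the cross-pairs turns $2^{r-1}$ simultaneous constraints into $r-1$.

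The paper avoids this by \emph{not} inducting on $r$: it keeps $r$ fixed and inducts on a tree-height parameter $k\in[r]$, with the inductive statement carrying $r-k$ auxiliary index sets equipped with \emph{admissible selections} (a pair-degree hypothesis, not yet actual edges). The density $\tfrac{r-2}{r-1}+\eps$ is spent only at the base $k=1$, where exactly $r-1$ neighbourhoods must be intersected --- and this is tight. In the step from $k-1$ to $k$ one subtree of the current system temporarily plays the role of an extra auxiliary set for its siblings, so the induction hypothesis is invoked with $r-(k-1)$ auxiliary sets and the constraint count never rises above $r-1$. Carrying this through forces one to allow several vertices $P^{ab}_d$ in each class $\cP^{ab}$, indexed by a tree parameter $d$, assembled into what the paper calls a \emph{fortress}; a single vertex per pair does not survive the induction. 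Your instinct to strengthen the statement with a reservoir is correct, but the right strengthening is this multi-layered one, not a single reservoir set.
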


\begin{proof}[Proof of Theorem~\ref{thm:K2r} assuming Proposition~\ref{lem:reduced}] 
Roughly speaking, this reduction consists of two parts. 
Given a $\bigl(\frac{r-2}{r-1}+\eps,\eta,\ee\bigr)$-dense hypergraph $H$
we will apply the regularity lemma in the form of  Theorem~\ref{thm:TuRL}
and obtain a reduced hypergraph $\cA$ for $\hat H\subseteq H$.
In the first part we then verify that for an appropriate choice of the involved constants 
the reduced hypergraph~$\cA$ is indeed 
$\bigl(\frac{r-2}{r-1}+\eps/4, \delta, \ee\bigr)$-dense.
This allows for an application of 
Proposition~\ref{lem:reduced} yielding a clique of order $2^r$ in $\cA$. 
In the second part it remains to check that this clique of order~$2^r$ in $\cA$
defines an appropriate collection of triads ready for an application of the embedding lemma (Theorem~\ref{thm:EL})
yielding a copy of~$K^{(3)}_{2^r}$ in $\hat H\subseteq H$. Below we give the details of this proof.

Given $r\geq 2$ and $\eps>0$ we fix auxiliary constants and functions to satisfy the hierarchy
\begin{equation}\label{eq:phier}
	\tfrac{1}{r}\,,\eps\gg \delta \gg \tfrac{1}{m}\,, d_3  \gg \delta_3\gg\tfrac{1}{\l} \gg \delta_2(\l)\,,\tfrac{1}{N(\l)}\gg\tfrac{1}{T_0}\gg \eta
\end{equation}
where $\delta$ and $m$ are given by Proposition~\ref{lem:reduced} applied with $r$ and $\eps/4$;
and $\delta_3$, and the functions $\delta_2(\cdot)$, and $N(\cdot)$ are given by Theorem~\ref{thm:EL} applied for 
$F=K_{2^r}^{(3)}$ and $d_3$; and $T_0$ is given by Theorem~\ref{thm:TuRL}.

For a $(\frac{r-2}{r-1}+\eps,\eta,\ee)$-dense hypergraph $H=(V,E)$ on sufficiently many vertices, we apply 
the regularity lemma in the form of Theorem~\ref{thm:TuRL} and obtain a subhypergraph 
\[
	\hat H=(\hat V, \hat E)\subseteq H\,,
\]
some integer $\l\leq T_0$, a vertex partition $V_1\dcup \dots\dcup V_m=\hat V$, 
and bipartite graphs $P^{ij}_{\alpha}$ for all~$i$ and $j$ with $1\leq i<j\leq m$ and every $\alpha\in[\l]$ satisfying
properties~\ref{TuRL:1}--\ref{TuRL:4} of Theorem~\ref{thm:TuRL}. For the index set $I=[m]$
we consider the naturally given reduced hypergraph $\cA$ for the regular partition of~$\hat H$ 
with vertex classes
\begin{align*}
	\cP^{ij}&=\{P^{ij}_{\alpha}\colon \alpha\in[\l]\}\\
\intertext{for all distinct $i$, $j\in I$ and with constituents $\cA^{ijk}$ for distinct $i$, $j$, $k\in I$ with}
	E(\cA^{ijk})&=\big\{\{P^{ij}_\alpha,P^{ik}_\beta,P^{jk}_\gamma\}\colon 
		(\alpha,\beta,\gamma)\in[\l]^3 \tand d(\hat H|P^{ijk}_{\alpha\beta\gamma})\geq d_3\big\}\,.
\end{align*}
Next we check that the reduced hypergraph $\cA$ is $\bigl(\frac{r-2}{r-1}+\frac{\eps}{4},\,\delta,\ee\bigr)$-dense.
Given distinct indices $i$, $j$, $k\in I$ and $P^{ij}\in\cP^{ij}$ and $P^{ik}\in\cP^{ik}$
it follows from the so-called graph counting lemma for graphs that for the $(\delta_2(\l),1/\l)$-regular bipartite graphs 
$P^{ij}$ and~$P^{ik}$ we have  
\begin{equation}\label{eq:p2}
	\Big|\big|\cK_{\ee}(P^{ij},P^{ik})\big|- \frac{1}{\l^2}|V_i||V_j||V_k|\Big|\leq 2\delta_2(\l)|V_i||V_j||V_k|\,.
\end{equation}
Consequently, the $(\frac{r-2}{r-1}+\eps,\eta,\ee)$-denseness of $H$ 
implies that the number $e^H_{\ee}(P^{ij},P^{ik})$ of hyperedges in $H$ matching $P_2$'s from $\cK_{\ee}(P^{ij},P^{ik})$
satisfies
\begin{align*}
	e^H_{\ee}(P^{ij},P^{ik})
	&\geq
	\left(\frac{r-2}{r-1}+\eps\right)\frac{1}{\l^2}|V_i||V_j||V_k|-2\delta_2(\l)|V_i||V_j||V_k|-\eta n^3\\
	&\geq
	\left(\frac{r-2}{r-1}+\frac{\eps}{2}\right)\frac{1}{\l^2}|V_i||V_j||V_k|\,.
\end{align*}
Owing to~\eqref{eq:TCL} we have $|\cK_3(P^{ij}\cup P^{ik}\cup P^{jk})|\leq |V_i||V_j||V_k|/\l^3 + 3\delta_2(\l)|V_i||V_j||V_k|$ for every 
$P^{jk}\in\cP^{jk}$ and combined with the upper bound in~\eqref{eq:p2} and $\delta_2(\l)\ll 1/\l\ll\eps$ we obtain
\begin{equation}\label{eq:pa}
	\big|\big\{P^{jk}\in\cP^{jk}\colon d(H|P^{ij}\cup P^{ik}\cup P^{jk})\geq d_3\big\}\big|
	\geq 
	\left(\frac{r-2}{r-1}+\frac{\eps}{3}-d_3\right)\l
\end{equation}
for any given pair $(P^{ij},P^{ik})\in\cP^{ij}\times\cP^{ik}$. 

Property~\ref{TuRL:4} of Theorem~\ref{thm:TuRL}
implies that for all but up to at most $\sqrt{\delta_3}\l^2$ pairs $(P^{ij},P^{ik})\in\cP^{ij}\times\cP^{ik}$
there are at most $\sqrt{\delta_3}\l$ graphs $P^{jk}\in\cP^{jk}$ such that  
\[
	d(\hat H|P^{ij}\cup P^{ik}\cup P^{jk})=0\,,\ \text{while}\ d(H|P^{ij}\cup P^{ik}\cup P^{jk})\geq d_3\,.
\] 
Consequently, from~\eqref{eq:pa} it follows that 
for all but at most $\sqrt{\delta_3}\l^2$ pairs $(P^{ij},P^{ik})\in\cP^{ij}\times\cP^{ik}$ there are at least 
\[
	\left(\frac{r-2}{r-1}+\frac{\eps}{3}-d_3-\sqrt{\delta_3}\right)\l
	\geq
	\left(\frac{r-2}{r-1}+\frac{\eps}{4}\right)|\cP^{jk}|
\]
graphs $P^{jk}\in\cP^{jk}$ such that $\{P^{ij},P^{ik},P^{jk}\}\in E(\cA)$. 
In other words, since $\sqrt{\delta_3}\leq \delta$  the reduced hypergraph $\cA$ 
for $\hat H$ is $\bigl(\frac{r-2}{r-1}+\frac{\eps}{4},\delta,\ee\bigr)$-dense.

Proposition~\ref{lem:reduced} then shows that $\cA$ contains a clique of oder $2^r$, i.e., there exists  $J\subseteq [m]$ of 
size $2^r$ and bipartite graphs $P^{ij}\in\cP^{ij}$ for any distinct $i$, $j\in J$ such that $\{P^{ij},P^{ik},P^{jk}\}$
is a hyperdge of $\cA$ for all distinct $i$, $j$, $k\in J$. By the definition of $\cA$ this shows that 
$d(\hat H|P^{ij}\cup P^{ik}\cup P^{jk})\geq d_3$ and, hence, we may apply the embedding lemma (Theorem~\ref{thm:EL}) to 
$\hat H[\bigcup_{j\in J} V_j]$ and $P=\bigcup_{\{i,j\}\in J^{(2)}}P^{ij}$ to obtain the desired clique $K^{(3)}_{2^r}$ in $\hat H\subseteq H$. 
\end{proof}
It is left to verify Proposition~\ref{lem:reduced}, which will be the content of the next section.

\section{Embedding cliques in the reduced hypergraph}
\label{sec:embred}
In this section we shall provide a proof of Proposition~\ref{lem:reduced}.
This will involve several inductions, which will require to prove a more general and somewhat technical
statement (see Proposition~\ref{lem:fortress}). Instead of proving it 
directly it appears preferable to state and prove an even more general Proposition~\ref{lem:general}. 
We will show that
\[
	\text{Proposition~\ref{lem:general} $\Longrightarrow$ Proposition~\ref{lem:fortress}
		$\Longrightarrow$ Proposition~\ref{lem:reduced}\,,}
\]
and thus the
proof of Theorem~\ref{thm:K2r} will be complete with the proof of Propostion~\ref{lem:general}.  

To facilitate the wording of these generalisations, we introduce some further concepts. 
We will frequently deal with finite sequences of the form 
$a=(a_1, \ldots, a_k)$, where~$k$ is a nonnegative integer. If $k=0$, then $a$ is the 
{\it empty sequence} denoted by $\vn$. Generally, $k$ is called the {\it length} of $a$ 
and we express this by writing $k=|a|$. 
For an integer $\ell\in [0,k]$ the {\it restriction} $a|\ell$ is defined to be the
{\it initial segment} $(a_1, \ldots, a_\ell)$ of $a$ and for $\l\in[k]$ we denote the $\l$-th element 
of $a$ by $a(\l)$, i.e., $a(\l)=a_{\l}$. A {\it direct continuation} of~$a$ is a
finite sequence $b$ obtainable from~$a$ by appending an arbitrary 
further term to it, so that~$b$
satisfies $|b|=k+1$ and $b|k=a$. 
The {\it concatenation} of two finite sequences
$a=(a_1, \ldots, a_k)$ and $b=(b_1, \ldots, b_\ell)$ is defined to be 
$(a_1, \ldots, a_k, b_1, \ldots, b_\ell)$ and denoted by~$a\circ b$. 
Moreover, the longest common initial segment of $a$ and $b$ is denoted by 
$a\wedge b$. The following notion of regular trees will be useful.

\begin{dfn}[{$[k,M]$-system}]
For integers $k, M\ge 1$ an {\it $M$-ary tree of height $k$} is
a set~$T$ of finite sequences whose length is at most $k$, such that 
\begin{enumerate}
\item[$\bullet$]
$\vn\in T$, sometimes called the \emph{root} of $T$, and
\item[$\bullet$] 
every $a\in T$ with $|a|<k$ has precisely $M$ direct continuations in $T$. 
\end{enumerate}
The set of elements $\sigma$ that extend some $a=(a_1,\dots,a_\l)\in T$ to a direct continuation $a\circ (\sigma)\in T$
are the \emph{successors of $a$} denoted by 
\[
	\ccS_T(a)=\{\sigma\colon (a_1,\dots,a_\l,\sigma)\in T\}\,.
\]
The {\it leaves} of $T$ are its elements of length $k$ and the set of these leaves is denoted 
by $[T]$. We say a set $S$ is a {\it $[k, M]$-system} if $S=[T]$ holds for some 
$M$-ary tree $T$ of height~$k$. 
\end{dfn}
Giving two examples, we would like to mention that any set consisting of $M$ elements can be viewed as a $[1, M]$-system, whilst the boolean cube $\{0, 1\}^k$ is a $[k, 2]$-system. 
Moreover, we notice that we have $|S|=M^k$ for any $[k,M]$-system $S$. 

In the iterated Ramsey-type 
arguments that we use in the proofs of this section we
will move from $[k,M]$-systems to $[k,m]$-systems for some $m\ll M$. However, while we can 
preserve the tree structure, we have no control about the subtree of the original 
$[k,M]$-system that will be kept after a Ramsey argument. In fact, this is the reason why 
we prefer to work with trees and $[k,M]$-systems, instead of sets $S$ of 
the form $S=\cM^k$ for some $M$-element set~$\cM$.

For the proof of Proposition~\ref{lem:reduced} we are given a 
$\bigl(\frac{r-2}{r-1}+\eps, \delta, \ee\bigr)$-dense reduced hypergraph~$\cA$ with  index set $I$
and we may assume that $\delta\ll |I|^{-1}\ll\eps$. 
We then need to obtain some $J\subseteq I$ with $|J|=2^r$ that spans a clique in $\cA$.
For that we will view $I$ as an $[r, M]$-system for some large integer $M$ and 
use Ramsey-type arguments for shrinking~$I$ down to an appropriate 
$[r, 2]$-system~$J$, such that the required vertices and edges of $\cA$ exist (see Definition~\ref{def:fortress}
and Fact~\ref{fact:fortress} below). 
We begin with the following observation, which follows by a simple averaging argument, and  which 
will be utilised in the proof of Proposition~\ref{lem:general}.

\begin{lemma}\label{lem:system}
Given two integers $k, M\ge 1$, let $S$ be a $[k, M]$-system and let $X$ be a subset of $S$
satisfying $|X|\ge \eps M^k$ for some $\eps>0$. Then for some integer $m\ge \eps M/k$ there
exists a $[k, m]$-system $S'\subseteq X$. 
\end{lemma}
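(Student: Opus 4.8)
The plan is to prove Lemma~\ref{lem:system} by induction on the height $k$, tracking the tree structure carefully. Let $T$ be an $M$-ary tree of height $k$ with $[T]=S$. For the base case $k=1$, the system $S$ has $M$ elements and $X\subseteq S$ with $|X|\geq\eps M$, so $X$ itself is a $[1,m]$-system with $m=|X|\geq\eps M\geq\eps M/1$. For the inductive step, I would look at the $M$ direct continuations $(\sigma)$ of the root and, for each $\sigma\in\ccS_T(\vn)$, consider the subtree $T_\sigma$ hanging below it (an $M$-ary tree of height $k-1$ once we strip the leading coordinate) together with the slice $X_\sigma=\{b:(\sigma)\circ b\in X\}\subseteq [T_\sigma]$.

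Now comes the step I expect to be the main point: a two-sided averaging argument over the first coordinate. Call $\sigma$ \emph{rich} if $|X_\sigma|\geq\tfrac{\eps}{k}\cdot\tfrac{k-1}{k-1}M^{k-1}$ — more precisely, if $|X_\sigma|\geq\eps' M^{k-1}$ for a suitably chosen $\eps'$. Since $\sum_{\sigma}|X_\sigma|=|X|\geq\eps M^k$ and each $|X_\sigma|\leq M^{k-1}$, a counting argument shows that at least $(\eps-\eps')M$ of the $\sigma$ are rich; choosing $\eps'=\eps\cdot\tfrac{k-1}{k}$ gives at least $\tfrac{\eps}{k}M$ rich values of $\sigma$. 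For each rich $\sigma$, the induction hypothesis applied to $X_\sigma\subseteq[T_\sigma]$ with parameter $\eps'$ and height $k-1$ yields an integer $m_\sigma\geq\eps' M/(k-1)=\eps M/k$ and a $[k-1,m_\sigma]$-subsystem of $X_\sigma$. The one subtlety is that the $m_\sigma$ need not agree across different rich $\sigma$; I would fix this by taking $m=\min_\sigma m_\sigma\geq\eps M/k$ over the rich $\sigma$ and noting that any $[k-1,m_\sigma]$-system contains a $[k-1,m]$-subsystem (just prune each branching vertex down to $m$ successors). Likewise, one must select exactly $m$ rich values of $\sigma$ to serve as the successors of the new root; this is possible since there are at least $\tfrac{\eps}{k}M\geq m$ of them.

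Assembling these pieces: take $m$ rich first-coordinates $\sigma_1,\dots,\sigma_m$, and below each $(\sigma_t)$ graft a $[k-1,m]$-system $S'_t\subseteq X_{\sigma_t}$. The tree whose root has the $m$ successors $\sigma_1,\dots,\sigma_m$ and whose subtree below $(\sigma_t)$ is the height-$(k-1)$ tree realizing $S'_t$ is then an $m$-ary tree of height $k$, and its leaf set $S'=\bigcup_t\{(\sigma_t)\circ b:b\in S'_t\}$ is a $[k,m]$-system contained in $X$, with $m\geq\eps M/k$ as required. The argument is entirely elementary; the only thing to be careful about is bookkeeping — keeping the parameter $\eps'$ and the minimum $m$ consistent through the recursion so that the bound $m\geq\eps M/k$ (rather than something weaker like $\eps M/k!$) comes out at the end. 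This works because at each of the $k$ levels we lose only a factor related to a single division, and the chosen thresholds telescope correctly: the product of the surviving fractions is $\tfrac1k\cdot\tfrac1{k-1}\cdots$, but since we re-use the \emph{same} target $\eps M/k$ at every level (the inductive call is made with $\eps'=\eps(k-1)/k$ and height $k-1$, giving $\eps'M/(k-1)=\eps M/k$), no further degradation occurs.
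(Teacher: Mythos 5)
Your argument follows exactly the paper's route: averaging over the first coordinate with the threshold $\eps' = (k-1)\eps/k$, the count showing at least $\eps M/k$ rich first coordinates, and the observation that $\eps' M/(k-1) = \eps M/k$ prevents the bound from degrading through the recursion. This is the same induction the paper carries out.

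There is, however, a small but genuine slip in the final bookkeeping. Having set $m=\min_\sigma m_\sigma$ and noted this gives $m\ge\eps M/k$, you then justify selecting $m$ rich first coordinates by the claim that there are ``at least $\tfrac{\eps}{k}M\ge m$ of them.'' But you have just derived the \emph{opposite} inequality $m\ge\eps M/k$; these are consistent only if $\eps M/k$ is an integer, and in general nothing forces the minimum of the $m_\sigma$'s to stay below the number of rich first coordinates. For instance, if every rich slice happens to produce an $m_\sigma$ much larger than $\eps M/k$ while only $\lceil\eps M/k\rceil$ first coordinates are rich, your $m$ exceeds the supply of rich $\sigma$'s and the grafting step cannot be carried out. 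The fix, which is what the paper does, is to define $m:=\lceil\eps M/k\rceil$ outright: this integer is simultaneously at most the number of rich $\sigma$'s (an integer $\ge\eps M/k$) and at most each $m_\sigma$ (also an integer $\ge\eps M/k$), so one may choose $m$ rich first coordinates and prune each $[k-1,m_\sigma]$-system the induction returns down to a $[k-1,m]$-system. With that one-line correction your proof coincides with the paper's.
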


\begin{proof}
We argue by induction on $k$. The base case $k=1$ is clear because $X$ is automatically 
going to be a $[1, |X|]$-system. Now suppose that $k\ge 2$ and that the lemma holds for 
$k-1$ in place of $k$. Let $S$ and $X$ be as above and denote the underlying tree of $S$
by $T$. Clearly the set $A=\{a\in T\st |a|=1\}$ has size $M$ and for every $a\in A$ the set 
$S_a=\{b\st a\circ b\in S\}$ is a $[k-1, M]$-system. Consider for every $a\in A$ the set
\[
X_a=\{b\in S_a\st a\circ b\in X\} \qquad
\] 
and let
\[
A'=\Bigl\{a\in A\st |X_a|\ge \tfrac{(k-1)\eps}k\cdot M^{k-1}\Bigr\}\,.
\]
Due to
\[
\eps M^k\le |X|=\sum_{a\in A}|X_a|\le 
\tfrac{(k-1)\eps}k\cdot M^{k}+|A'|\cdot M^{k-1}\,,
\]
we have $|A'|\ge \eps M/k$ and we can select a subset $A''\subseteq A'$ with
$|A''|=\lceil \eps M/k\rceil=:m$. Moreover, for every $a\in A''$ we may apply
the induction hypothesis to $X_a\subseteq S_a$ with $\eps'=(k-1)\eps/k$, thus obtaining a $[k-1, m]$-system
$S'_a\subseteq X_a$. Consequently,
\[
S'=\{a\circ b\st a\in A'' \text{ and } b\in S'_a\}
\]
is a $[k,m]$-system contained in $X$. 
\end{proof}

Next we introduce the somewhat technical notion of a \emph{fortress} 
(see Definition~\ref{def:fortress} below),
in a reduced hypergraph $\cA$. The additional structural requirements 
for $[k,M]$-systems to support a fortress serve two purposes: firstly 
a $[r,2]$-system that supports a fortress will give rise to a clique of 
order $2^r$ in $\cA$ and secondly $[r,M]$-systems for $M\geq 2$ that support
fortresses will be ``rich enough'' for the intended inductive arguments.

Consider an $M$-ary tree $T$ of height $k$ and the associated $[k, M]$-system $[T]$. 
For every sequence $c=(c_1, \ldots, c_\ell)\in T$ we
set
\[
	Q(c)=\bigl\{(d_1, \ldots, d_\ell)\st 
	d_i\in\ccS_T(c|(i-1))\setminus \{c_i\}\ \text{for every $i\in[\l]$}\bigr\}\,.
\] 
Since $T$ is an $M$-ary tree, there are $M-1$ successors of $c|(i-1)$ 
different from $c_i$ for every $i\in[\l]$ and thus we have $|Q(c)|=(M-1)^{|c|}$ 
for each $c\in T$.
Moreover, it follows from the definition that for the empty sequence $\vn$ we have $Q(\vn)=\{\vn\}$.
We also remark that $Q(c)$ is not necessarily a subset of $T$. For example, 
if $(a,\alpha_1), (a,\alpha_2), (a,\alpha_3), (b,\beta_1),\dots, (c,\gamma_3)$ are the leaves of 
a ternary tree of height two, then $Q((b,\beta_2))$ consists of $(a,\beta_1), (a,\beta_3), (c,\beta_1)$, and 
$(c,\beta_3)$. In the next definition we will make use of the fact, that if $d\in Q(c)$, then $d|s\in Q(c|s)$ for any $s=0,\dots,|c|$.

\begin{dfn}[fortress]\label{def:fortress} 
Let $T$ be an $M$-ary tree of height $k$ and let 
$\cA$ be a reduced hypergraph whose index set $I$ contains the $[k,M]$-system $S=[T]$. 
We say \emph{$S$ supports a fortress in $\cA$} if for every $a$, $b\in S$ with $a\neq b$
and for every $d\in Q(a\wedge b)$ there exists some vertex $P^{ab}_d\in\cP^{ab}\subseteq V(\cA)$\footnote{To be consistent with the notation in Section~\ref{sec:regmethod} we should maybe write something like $P^{ab}_{\alpha_{ab}(d)}$ where $1\leq \alpha_{ab}(d) \leq |\cP^{ab}|$. However, for a simpler notation  we will suppress such functions $\alpha_{ab}\colon Q(a\wedge b)\to[|\cP^{ab}|]$
and  simply write $P^{ab}_d$.}
such that 
\begin{enumerate}[label=\Flabel]
\item\label{it:F2}
for all distinct $a,b,c\in S$ satisfying
\begin{equation}\label{eq:ffort}
s:=|a\wedge b|=|a\wedge c|<|b\wedge c|\,,
\end{equation}
and for every $d\in Q(b\wedge c)$ with $d(s+1)=a(s+1)$ we have
\[
\bigl\{P_{d|s}^{ab}, P_{d|s}^{ac}, P_d^{bc}\bigr\}\in E(\cA^{abc})\,.
\]
\end{enumerate}
We refer to the set of vertices $\cF=\{P^{ab}_d\colon a,b\in S,\ a\neq b,\tand d\in Q(a\wedge b)\}$
as a \emph{fortress} and 
we say that $\cA$ contains a {\it $[k, M]$-fortress} if some subset of $I$ is a 
$[k, M]$-system supporting a fortress.
\end{dfn}

\begin{rem}\label{rem:1fortress}
Note that property~\ref{it:F2} is void for any $[1,M]$-fortress, since for a
$[1,M]$-system~$S$ we have
$a\wedge b=\vn$ for any two distinct $a$, $b\in S$. Therefore,~\eqref{eq:ffort}
will never hold for distinct $a$, $b$, $c\in S$ and we can select $P^{ab}_\vn\in\cP^{ab}$
arbitrarily.
\end{rem}

As mentioned above we now show that for $r\geq 2$ a $[r,2]$-fortress yields a clique of order~$2^r$ in $\cA$.
\begin{fact}\label{fact:fortress}
For every integer $r\geq 2$ a reduced hypergraph $\cA$ contains a $[r,2]$-fortress if and only if it contains a
clique of order $2^r$.
\end{fact}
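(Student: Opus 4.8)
The plan is to prove both directions by translating between the tree structure on $S$ and the index set of the clique. For the ``if'' direction, suppose $\cA$ contains a clique of order $2^r$, witnessed by a set $J\subseteq I$ with $|J|=2^r$ and vertices $P^{ij}\in\cP^{ij}$ for distinct $i,j\in J$. I would fix any bijection $\psi\colon\{0,1\}^r\to J$ and declare $S=\{0,1\}^r$, which is a $[r,2]$-system by the remark after the definition of $[k,M]$-systems; its underlying tree is the full binary tree $T$ of height $r$. For $a,b\in S$ with $a\neq b$ and $d\in Q(a\wedge b)$, I set $P^{ab}_d=P^{\psi(a)\psi(b)}$, i.e., I simply ignore $d$. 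Then property~\ref{it:F2} holds trivially: for any distinct $a,b,c\in S$ the triple $\{P^{ab}_{d|s},P^{ac}_{d|s},P^{bc}_d\}$ equals $\{P^{\psi(a)\psi(b)},P^{\psi(a)\psi(c)},P^{\psi(b)\psi(c)}\}$, which lies in $E(\cA^{\psi(a)\psi(b)\psi(c)})$ by the clique hypothesis. Hence $S$ supports a fortress and $\cA$ contains a $[r,2]$-fortress.

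\textbf{The ``only if'' direction.} This is where the real content lies. Suppose $S=[T]\subseteq I$ is a $[r,2]$-system supporting a fortress $\cF$ in $\cA$. Since $|S|=2^r$, it is natural to try $J=S$ itself as the index set of the clique; but the indices of $\cA$ are elements of $I$, and $S$ is already a subset of $I$, so this is legitimate. The task is: for each pair of distinct $a,b\in S$ choose a \emph{single} vertex $R^{ab}\in\cP^{ab}$ so that for every triple of distinct $a,b,c\in S$ we have $\{R^{ab},R^{ac},R^{bc}\}\in E(\cA^{abc})$. The difficulty is that the fortress provides vertices $P^{ab}_d$ indexed by the extra parameter $d\in Q(a\wedge b)$, and property~\ref{it:F2} only guarantees edges for triples $\{P^{ab}_{d|s},P^{ac}_{d|s},P^{bc}_d\}$ with the compatibility condition $s=|a\wedge b|=|a\wedge c|<|b\wedge c|$ and $d(s+1)=a(s+1)$. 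So I must pick, coherently across all pairs, one value of $d$ for each pair such that every triple ends up in the ``good'' configuration.

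\textbf{Key idea for the coherent choice.} For a pair $\{a,b\}$ with $s=|a\wedge b|$, I would like to record, for the purpose of choosing $R^{ab}$, a sequence $d\in Q(a\wedge b)$ that encodes ``where $a$ and $b$ sit relative to everything else.'' The clean choice is to take $d$ of length $s$ whose $i$-th coordinate $d_i$, for $i\in[s]$, is the unique successor of $(a\wedge b)|(i-1)$ in $T$ different from $(a\wedge b)(i) = a(i) = b(i)$ — this is forced and well-defined since $T$ is binary, so $Q(a\wedge b)$ is in fact a singleton! That is the crucial observation: because $M=2$, for every $c\in T$ the set $Q(c)$ has exactly $(M-1)^{|c|}=1$ element. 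So there is no choice to make at all: each $P^{ab}_d$ is uniquely determined, and I set $R^{ab}=P^{ab}_{d}$ for that unique $d\in Q(a\wedge b)$. It remains to verify the edge condition. Fix distinct $a,b,c\in S$; among the three pairwise meets $|a\wedge b|, |a\wedge c|, |b\wedge c|$, the two smallest are equal (this is the standard ultrametric property of the meet on a tree — if the three elements branch, two of them separate first from the third at the same level). Relabelling $a,b,c$ so that $s=|a\wedge b|=|a\wedge c|\le |b\wedge c|$, if $s<|b\wedge c|$ then $b$ and $c$ agree through level $s+1$ while $a$ disagrees there, so the unique $d\in Q(b\wedge c)$ has $d(s+1)\ne (b\wedge c)(s+1)=b(s+1)$, forcing $d(s+1)=a(s+1)$ since $M=2$; moreover $d|s$ is the unique element of $Q((b\wedge c)|s)=Q(a\wedge b)=Q(a\wedge c)$, so $P^{ab}_{d|s}=R^{ab}$ and $P^{ac}_{d|s}=R^{ac}$, and $P^{bc}_d=R^{bc}$, whence \ref{it:F2} gives $\{R^{ab},R^{ac},R^{bc}\}\in E(\cA^{abc})$. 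If instead $s=|a\wedge b|=|a\wedge c|=|b\wedge c|$, then in a binary tree all three would have to branch in three distinct directions at level $s+1$, which is impossible; so this degenerate case does not occur. This exhausts all cases, so $S$ spans a clique of order $2^r$ in $\cA$, completing the proof.

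\textbf{Main obstacle.} The only subtle point is bookkeeping: making sure the restriction $d|s$ of the unique sequence $d\in Q(b\wedge c)$ really coincides with the unique sequence in $Q(a\wedge b)$, which hinges on the fact noted before Definition~\ref{def:fortress} that $d\in Q(c)$ implies $d|s\in Q(c|s)$, together with $(b\wedge c)|s=a\wedge b=a\wedge c$ under the labelling above. Once the binary-tree simplification ``$|Q(c)|=1$'' is in hand, the whole argument is essentially a case analysis on the ultrametric of meets, with no real computation.
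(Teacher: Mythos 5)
Your proof is correct and follows essentially the same route as the paper: both directions rest on the observation that $|Q(c)|=1$ in a binary tree (so the fortress labels collapse to a single vertex per pair), that the side condition $d(s+1)=a(s+1)$ is automatic when $M=2$, and that any three distinct leaves of a binary tree can be ordered to satisfy~\eqref{eq:ffort} (your ultrametric observation that the two smallest of the three meet-depths coincide, and that the fully degenerate case $|a\wedge b|=|a\wedge c|=|b\wedge c|$ is impossible for $M=2$). The paper states all this more tersely, but the content is identical; your only cosmetic looseness is declaring $S=\{0,1\}^r$ rather than the relabelled copy of $J$ inside $I$, which the paper handles by the same implicit identification.
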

\begin{proof}
	For a binary tree $T$ of height $r$ we have $|Q(c)|=1$ for any $c\in T$. Consequently, 
	an $[r,2]$-fortress contained in $\cA$  corresponds to a subset $J$ of the index set of $\cA$
	of size $|J|=2^r$
	and a selection $\{P^{ab}\in\cP^{ab}\colon a,b\in J\tand a\neq b\}$ such that 
	$\{P^{ab}, P^{ac}, P^{bc}\}\in E(\cA)$, whenever~\eqref{eq:ffort} holds.
	In fact, the condition $d(s+1)=a(s+1)$ for the unique $d\in Q(b\wedge c)$
	follows for binary trees directly from~\eqref{eq:ffort}.
	Moreover, any three distinct leaves of a binary tree can be labeled $a$, $b$,~$c$ 
	in such a way that~\eqref{eq:ffort}  holds and, consequently, 
	an $[r,2]$-fortress corresponds to a clique of order~$2^r$ in $\cA$.
	
	On the other hand, if $\cA$ with index set $I$
	contains a clique of order $2^r$, then there exists a 
	subset $J\subseteq I$ of size $2^r$ and vertices $P^{ij}\in\cP^{ij}$
	for all distinct $i$, $j\in J$ such that $\{P^{ij}, P^{ik}, P^{jk}\}$
	is a hyperedge of $\cA$ for all distinct $i$, $j$, $k\in J$.
	Relabelling all elements of $J$ by binary sequences of length $r$ 
	gives rise to a binary tree of height $r$ that carries a fortress 
	in $\cA$.
\end{proof}
\begin{rem}\label{rem:colour} 
It is not hard to show that a $[2,m]$-fortress in $\cA$ 
gives rise to a situation where the embedding lemma (Theorem~\ref{thm:EL}) can be applied 
for any hypergraph~$F$ on at most $m$ vertices 
with the property that one can colour the vertices and the pairs 
of vertices of $F$ with~$m$ colours such that every hyperedge $\{x,y,z\}$ of $F$ contains exactly 
two vertices, say $x$ and~$y$, which are coloured by the same colour, say red,
and the pair $\{x,y\}$ and the vertex $z$ have the same colour different from red.

Note that this includes for example all $2$-colourable hypergraphs~$F$ and in view of 
Proposition~\ref{lem:fortress} for $r=2$ 
this can be used to show that $\piee(F)=0$ holds
for any $2$-colourable $3$-uniform hypergraph $F$. In fact, for general $r$
one can show that $[r,m]$-fortresses allow the embedding of $2^{r-1}$-partite 
$3$-uniform hypergraphs $F$ with $m$ vertices
and, as a result, one can deduce $\piee(F)=\frac{r-2}{r-1}$ for such $F$. We omit
the details here.
\end{rem}

Proposition~\ref{lem:reduced} follows by Fact~\ref{fact:fortress}  from the case $m=2$ 
of Proposition~\ref{lem:fortress}.

\begin{prop}\label{lem:fortress} 
Suppose that integers $r, m\ge 2$ and a real $\eps>0$ are given. Then there are
a real $\delta>0$ and an integer $M$ with the property that every 
$\bigl(\tfrac{r-2}{r-1}+\eps, \delta, \ee\bigr)$-dense reduced hypergraph 
whose index set is an $[r, M]$-system contains an $[r, m]$-fortress.   
\end{prop}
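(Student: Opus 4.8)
The plan is to prove Proposition~\ref{lem:fortress} by induction on $r$, following the familiar pattern of such hypergraph-regularity arguments, where the main work lies in passing from an $[r-1,\cdot]$-fortress to an $[r,\cdot]$-fortress. To set this up I would state a strengthened auxiliary version (the ``Proposition~\ref{lem:general}'' alluded to in the text) that additionally records, for a fixed extra index $i_0$ outside the current system, a large ``link'' set of vertices in the classes $\cP^{i_0 a}$ compatible with the fortress; this extra bookkeeping is what makes the induction go through, since when we add a new layer of the tree we must connect the two halves through hyperedges of the constituents $\cA^{abc}$ with $a$ in the old half and $b,c$ in the new half.

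First I would fix the hierarchy $\tfrac1r,\eps\gg\delta\gg\tfrac1M\gg\dots$, so that all Ramsey numbers and averaging losses incurred along the way are absorbed. The base case $r=2$ (or even $r=1$, cf.\ Remark~\ref{rem:1fortress}) is essentially trivial: by Remark~\ref{rem:1fortress} property~\ref{it:F2} is void for $[1,M]$-fortresses, and for $[2,M]$ one only needs, for each triple of leaves, a single compatible vertex in the third class, which the $(d,\delta,\ee)$-density delivers after discarding the at most $\delta|\cP^{ij}||\cP^{ik}|$ bad pairs. For the inductive step, given an $[r,M]$-system $S=[T]$ with $M$ huge, I would split $T$ at the root into its $M$ subtrees $T_\alpha$ of height $r-1$. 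Using Lemma~\ref{lem:system} together with a Ramsey-type colouring argument on the root-successors, I would first pass to a sub-system where all the relevant constituent data between distinct branches behaves uniformly, then apply the induction hypothesis inside a single branch to obtain an $[r-1,m']$-fortress there (with $m'$ still large), and finally use the density condition to merge two branches: for a vertex $P^{ab}_{d|s}$ already chosen in the fortress of one branch, the $(d,\delta,\ee)$-denseness of the constituent $\cA^{abc}$ guarantees that for all but a $\delta$-fraction of pairs the set of common neighbours in $\cP^{bc}$ has size $\geq d|\cP^{bc}|$; iterating this over the (boundedly many) triples that need to be satisfied and over the leaves of the new branch, and using that $d=\tfrac{r-2}{r-1}+\eps\geq\tfrac{1}{r-1}$ worth of the classes survive each restriction, one can keep a $[\,\cdot\,,2]$-worth of choices alive down to the bottom — this is exactly where the numerical value $\tfrac{r-2}{r-1}$ enters, since after $r-1$ successive restrictions each keeping a $\tfrac{1}{r-1}+\eps'$ fraction one still has a nonempty (indeed growing) supply.

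The main obstacle, and the reason a bare induction on Proposition~\ref{lem:fortress} does not close, is the \emph{compatibility across branches}: when we add the new root layer, the vertices $P^{ab}_d$ for $a$ in an old branch and $b$ in the new branch are indexed by $d\in Q(a\wedge b)$ with $a\wedge b=\vn$, so they must be chosen \emph{after} the new branch's internal structure is fixed, yet they must simultaneously satisfy~\ref{it:F2} for every triple $a,b,c$ with $b,c$ in the new branch. This forces the stronger statement (Proposition~\ref{lem:general}) carrying a reservoir of admissible vertices indexed by a partial tree, so that the induction hypothesis can be invoked on the new branch \emph{relative to} that reservoir. Concretely I would formulate Proposition~\ref{lem:general} as: for every $r,m$ and $\eps$, and every $t\geq 0$, there are $\delta,M$ such that in any $(\tfrac{r-2}{r-1}+\eps,\delta,\ee)$-dense reduced hypergraph on an $[r,M]$-system $S$, together with any $t$ further indices and large enough candidate sets attached to them, one finds an $[r,m]$-fortress whose vertices are moreover compatible with a full fortress-extension to those $t$ indices. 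Deriving Proposition~\ref{lem:fortress} from it is immediate by taking $t=0$, and deriving Proposition~\ref{lem:reduced} from Proposition~\ref{lem:fortress} is Fact~\ref{fact:fortress} with $m=2$. The remaining routine points — the exact Ramsey numbers used to homogenise branch-to-branch data, the union bounds over the $O_r(1)$ constraint-triples, and checking that the fraction $\tfrac{r-2}{r-1}+\eps'$ is preserved under the $r-1$ nested restrictions — I would carry out in the body of the proof.
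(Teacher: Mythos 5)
Your high-level plan is right in spirit: to prove Proposition~\ref{lem:fortress} one must strengthen it to an auxiliary statement that carries along a ``reservoir'' of extra data, and the reason is exactly the compatibility-across-branches issue you identify (when a new root layer is added, the cross-branch vertices $P^{ab}_\vn$ must be fixed \emph{before} the deeper structure of a branch is built, so the induction hypothesis must be invocable relative to a reservoir). This is indeed what the paper's Proposition~\ref{lem:general} does via the notion of $d$-admissible $(X_0,X_j)$-selections, and the paper, like you, first handles the trivial $[1,\cdot]$-case (your Remark~\ref{rem:1fortress} observation matches their Lemma~\ref{lem:base}).

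However, your parametrization of the auxiliary statement is mismatched with the arithmetic and would not close. The paper keeps $r$ (and hence the density $\tfrac{r-2}{r-1}+\eps$) \emph{fixed throughout} and inducts on $k\in[r]$, the height of the tree carrying the fortress; the number of auxiliary index-\emph{sets} is then exactly $r-k$, so that it equals $r-1$ at the bottom $k=1$ and decreases by one each time the tree grows by one level, with a branch of the old tree absorbed as the new auxiliary set. Since $\tfrac{r-2}{r-1}=1-\tfrac{1}{r-1}$, the additive union-bound ``codegree'' argument in Lemma~\ref{lem:base} survives exactly $r-1$ nested intersections — and the coupling $(\text{tree height})+(\#\text{auxiliary sets})=r$ guarantees one never needs more. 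Your version instead inducts on $r$, ties the density $\tfrac{r-2}{r-1}+\eps$ to the (shrinking) tree height, and treats $t$ as a free parameter universally quantified. That combination breaks: unwinding your induction from some starting $r$ down to tree height $2$ accumulates roughly $r-2$ auxiliaries, but the density at that stage has shrunk to $\tfrac{0}{1}+\eps=\eps$, and intersecting $r-2$ codegree sets of density $\eps$ is vacuous (one needs $\eps>1-\tfrac{1}{r-2}$). Your phrase ``each keeping a $\tfrac{1}{r-1}+\eps'$ fraction'' reads as a multiplicative loss, which would make the choice $\tfrac{r-2}{r-1}$ irrelevant; the argument is additive, and the value $\tfrac{r-2}{r-1}$ is exactly tight for $r-1$ intersections. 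A secondary gap: in your first paragraph you propose a \emph{single} extra index $i_0$ carrying large vertex subsets of $\cP^{i_0 a}$, but to run the inductive step one must feed an entire sibling branch (a set of $M^{k-1}$ indices) into the auxiliary slot; the reservoir must therefore be a family of index-\emph{sets} equipped with admissible vertex selections, as in Definition~\ref{def:admissible}, not one index with fat vertex classes.
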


The proof of Proposition~\ref{lem:fortress} in turn proceeds in $r$ steps. The first idea of this kind 
one might come up with is to wish proving by induction on $k$ that for $k\in [r]$ and 
$m\ll M\ll \delta^{-1}$ every $\bigl(\tfrac{r-2}{r-1}+\eps, \delta, \ee\bigr)$-dense reduced hypergraph, whose index set $I$ is a $[k, M]$-system, contains a $[k, m]$-fortress. However, for $k<r$ we have 
$\frac{k-2}{k-1}<\frac{r-2}{r-1}$ and 
Proposition~\ref{lem:fortress} asserts, that a $[k,m]$-fortress already appear 
in $\bigl(\tfrac{k-2}{k-1}+\eps, \delta, \ee\bigr)$-dense reduced hypergraphs. This seems to indicate 
that for $k<r$ we can insist on additional side-conditions, which will be utilised in the inductive step
and that become weaker for larger $k$.

The condition that turned out to work for us says roughly
the following: Suppose that the index set $I$ contains, besides the $[k, M]$-system $X_0$ under 
discussion, also some further sets $X_1, \ldots, X_{r-k}$ that we know to be ``well-attached'' to $X_0$
in the sense that we are given for every $x\in X_0$ and every $y\in \bigcup_{j\in[r-k]}X_j$
a vertex $P^{xy}\in \cP^{xy}$ such that for all  choices $x,x'\in X_0$ and $y\in \bigcup_{j\in[r-k]}X_j$
the vertices $P^{xy}$ and $P^{x'y}$ have high pair-degree in $\cP^{xx'}$ (see Definition~\ref{def:admissible} below). Note that this property would 
be given automatically for any choice of $P^{xy}$, if there would be no exceptional pairs in $\cA$ in the sense of 
Definition~\ref{def:denseA}.

This ``well-attachedness'' allows us to shrink the sets $X_1, \ldots, X_{r-k}$
down to linearly-sized subsets $Y_1,\dots,Y_{r-k}$ of themselves, 
such that later on one can find the vertices $P^{xx'}_d$ of the  
desired $[k, m]$-fortress 
in the neighbourhood of $P^{xy}$ and $P^{x'y}$ 
for all $y\in  \bigcup_{j\in[r-k]}Y_j$.  
This additional property will be crucial for the inductive construction of the fortress in the 
proof of Proposition~\ref{lem:general}.   
    
\begin{dfn}\label{def:admissible}
Let~$X$ and $Y$ be two disjoint subsets of the index set of a reduced hypergraph~$\cA$ and 
suppose that $d\in[0,1]$. A {\it $d$-admissible 
$(X, Y)$-selection} is a collection 
\[
\ccC=\{P^{xy}\in \cP^{xy}\st x\in X\text{ and }y\in Y\}
\]
of vertices of $\cA$ such that
\begin{enumerate}
\item[$\bullet$]
if $x, x'\in X$ are distinct and $y\in Y$, then the pair-degree of $P^{xy}$ and $P^{x'y}$ 
in $\cP^{xx'}$ is at least $d\,|\cP^{xx'}|$.
\end{enumerate} 
\end{dfn}
We remark that this notion of an admissible selection is not symmetric in $X$ and $Y$, i.e., a
$d$-admissible 
$(X, Y)$-selection $\ccC$ is not necessarily also a $d$-admissible $(Y, X)$-selection.

Our next immediate objective is to formulate and prove the first step of the induction for $k=1$
(see Lemma~\ref{lem:base} below) of the upcoming Proposition~\ref{lem:general}.
As it turns out the case $k=1$ is a bit simpler than the case of general $k$, because 
(i) the arising constants can be calculated rather easily, 
(ii) one of the assumptions and consequently of the variables turn out to be unnecessary 
when $k=1$, and
(iii) the notion of a $[1, m]$-fortress is especially simple. 
However, for later purposes it is better to prove a probabilistic strengthening of 
the statement for $k=1$. For all these reasons,
we deal with this case separately.  

\begin{lemma}\label{lem:base}
Let integers $r, m\ge 2$ and a real $\eps>0$ be given. 
Assume further 
\begin{enumerate}
\item[$\bullet$]
that $X_0, X_1, \ldots, X_{r-1}$ are disjoint subsets of the index
set of a reduced hypergraph~$\cA$ with $|X_0|=m$, 
\item[$\bullet$]
and that $\ccC_j=\{P^{xy}\st x\in X_0 \text{ and } y\in X_j\}$ is an
$\bigl(\tfrac{r-2}{r-1}+\eps\bigr)$-admissible $(X_0, X_j)$-selection for every $j\in [r-1]$.
\end{enumerate} 
For a collection of vertices
\[
	\ccC=\bigl\{P^{xx'}\in\cP^{xx'}\st x, x'\in X_0 \text{ and } x\ne x'\bigr\}
\]
and $j\in[r-1]$ set
\[
	Y_j(\ccC)=\big\{y\in X_j\st\{P^{xx'}, P^{xy}, P^{x'y}\}\in E(\cA^{xx'y})\text{ holds for all distinct }
	x, x'\in X_0\big\}\,.
\]
Then for  a selection $\ccC$ chosen uniformly at random from $\prod_{x\neq x'\in X_0}\cP^{xx'}$
the events
\[
	|Y_j(\ccC)|\geq (\tfrac\eps2)^{\binom{m}{2}}\,|X_j|
\]
hold simultaneously for all $j\in[r-1]$ with probability at least $(\tfrac\eps2)^{\binom{m}{2}}$.
\end{lemma}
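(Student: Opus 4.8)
The plan is to analyse the single random variable
\[
	W(\ccC)=\prod_{j=1}^{r-1}\frac{|Y_j(\ccC)|}{|X_j|}\in[0,1]\,,
\]
where we may assume each $X_j\neq\vn$, since for an empty $X_j$ the $j$-th event holds automatically and can be disregarded. A union bound over the $r-1$ events would lose a factor of $r-1$ and is hopeless here, but passing to $W$ circumvents this: if $W(\ccC)\ge q$ with $q:=(\eps/2)^{\binom{m}{2}}$, then, as all $r-1$ factors lie in $[0,1]$, \emph{each} of them is also at least $q$. Hence it suffices to prove $\PP_\ccC[W(\ccC)\ge q]\ge q$, and for this I would first bound $\mathbb E_\ccC[W(\ccC)]$ from below and then apply a reverse Markov estimate.

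For the first moment, expand
\[
	W(\ccC)=\frac{1}{\prod_{j=1}^{r-1}|X_j|}\sum_{(y_1,\dots,y_{r-1})\in X_1\times\dots\times X_{r-1}}\ \prod_{j=1}^{r-1}\mathbf 1\bigl[y_j\in Y_j(\ccC)\bigr]
\]
and estimate, for each fixed tuple $(y_1,\dots,y_{r-1})$, the probability that $y_j\in Y_j(\ccC)$ holds for all $j$. This event asks that $\{P^{xx'},P^{xy_j},P^{x'y_j}\}\in E(\cA^{xx'y_j})$ for every $j\in[r-1]$ and all distinct $x,x'\in X_0$; since the constraints attached to different pairs $\{x,x'\}\in\binom{X_0}{2}$ involve the disjoint, independent coordinates $P^{xx'}$ of $\ccC$, the probability factorises over these $\binom{m}{2}$ pairs. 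For a single pair $\{x,x'\}$, admissibility of $\ccC_j$ (Definition~\ref{def:admissible}) says that the pair-degree of $P^{xy_j}$ and $P^{x'y_j}$ in $\cP^{xx'}$ is at least $\bigl(\tfrac{r-2}{r-1}+\eps\bigr)|\cP^{xx'}|$, so the set of ``bad'' $P\in\cP^{xx'}$ for that index $j$ has size at most $\bigl(\tfrac1{r-1}-\eps\bigr)|\cP^{xx'}|$; taking the union of these bad sets over all $r-1$ indices $j$ still leaves at least $(r-1)\eps\,|\cP^{xx'}|\ge\eps\,|\cP^{xx'}|$ vertices $P^{xx'}$ that are good for every $j$ simultaneously. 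Multiplying the resulting probability (at least $\eps$) over the $\binom{m}{2}$ pairs, and then averaging over all tuples, yields $\mathbb E_\ccC[W(\ccC)]\ge\eps^{\binom{m}{2}}$.

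To conclude, since $W(\ccC)\in[0,1]$ we have $\eps^{\binom{m}{2}}\le\mathbb E_\ccC[W]\le\PP_\ccC[W\ge q]+q$, whence
\[
	\PP_\ccC[W\ge q]\ \ge\ \eps^{\binom{m}{2}}-q\ =\ \eps^{\binom{m}{2}}\bigl(1-2^{-\binom{m}{2}}\bigr)\ \ge\ q\,,
\]
the last inequality using $\binom{m}{2}\ge1$ (valid as $m\ge2$). Together with the observation from the first paragraph this shows that with probability at least $(\eps/2)^{\binom{m}{2}}$ all the events $|Y_j(\ccC)|\ge(\eps/2)^{\binom{m}{2}}|X_j|$, $j\in[r-1]$, occur simultaneously, as desired. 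The step that needs real care — and the main point of the argument — is the bad-set estimate: one must exploit the precise density $\tfrac{r-2}{r-1}$ so that the $r-1$ failure sets, of density $\tfrac1{r-1}-\eps$ each, do not jointly exhaust $\cP^{xx'}$; merely knowing that the relevant pair-degrees are positive would not suffice. Together with the idea of controlling the product $W$ rather than the individual terms, everything else is routine bookkeeping.
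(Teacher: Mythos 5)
Your proof is correct, and it follows a genuinely different route from the paper's. The paper first establishes the case $m=2$ by a direct averaging argument over $P^{xx'}$ (showing the set $A_j$ of ``good'' $P^{xx'}$ with $|Y_j(P^{xx'})|\ge\tfrac\eps2|X_j|$ has density at least $\tfrac{r-2}{r-1}+\tfrac\eps2$, then intersecting over $j$), and then iterates this $\binom{m}{2}$ times, conditioning on the earlier choices and multiplying the resulting conditional probabilities of the nested events $\cE^1,\dots,\cE^{\binom{m}{2}}$. You instead work with the single random variable $W=\prod_j|Y_j|/|X_j|$, compute a lower bound $\mathbb E[W]\ge\eps^{\binom m2}$ by expanding over tuples $(y_1,\dots,y_{r-1})$ and exploiting the independence of the $\binom m2$ coordinates of $\ccC$, and finish with the reverse Markov inequality $\mathbb E[W]\le\PP[W\ge q]+q$; the observation that a product of $[0,1]$-valued factors being $\ge q$ forces each factor to be $\ge q$ converts this into the required simultaneous lower bounds. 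Both arguments hinge on the same core fact — that after taking a union bound over $r-1$ bad sets each of density at most $\tfrac1{r-1}-\eps$, a fraction of at least $(r-1)\eps\ge\eps$ of $\cP^{xx'}$ remains good for every $j$ at once — but you package it as a first-moment-plus-reverse-Markov argument whereas the paper packages it as a sequential conditioning argument. The paper's version has the minor side benefit of exhibiting a nested chain $X_j=Y_j(\ccC^0)\supseteq\dots\supseteq Y_j(\ccC^{\binom m2})$ where each step loses only a factor $\eps/2$; your version is a bit more direct and, incidentally, gives the marginally stronger probability bound $\eps^{\binom m2}(1-2^{-\binom m2})$.
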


\begin{proof}
We commence by treating the case $m=2$ and $X_0=\{x, x'\}$, say. For each vertex 
$P^{xx'} \in\cP^{xx'}$ and each $j\in[r-1]$ we set
\[
Y_j(P^{xx'})=\bigl\{y\in X_j\st \{P^{xx'}, P^{xy}, P^{x'y}\}\in E(\cA^{xx'y})\bigr\}\,. 
\]
Notice that for every fixed $j\in [r-1]$ we have
\[
\sum_{P^{xx'}\in\cP^{xx'}}|Y_j(P^{xx'})|\ge \bigl(\tfrac{r-2}{r-1}+\eps\bigr)\,|\cP^{xx'}|\,|X_j|
\]
owing to our admissibility assumption, and thus the set 
\[
A_j=\{P^{xx'}\in \cP^{xx'}\st |Y_j(P^{xx'})|\ge \tfrac\eps2\,|X_j|\}
\]
satisfies $|A_j|\ge\bl \tfrac{r-2}{r-1}+\tfrac\eps2\br \cdot|\cP^{xx'}|$. 
Consequently for their intersection $A=\bigcap_{j\in[r-1]}A_j$ we obtain
$|A|\ge \tfrac{(r-1)\eps}{2}\cdot|\cP^{xx'}|\ge \tfrac\eps2\,|\cP^{xx'}|$.
Hence, when $P^{xx'} \in\cP^{xx'}$ gets chosen uniformly at random, the event
$P^{xx'}\in A$ happens with probability at least $\tfrac\eps2$. 
Thereby the case $m=2$ of our lemma is proved.

To obtain the general case we iterate this argument $\binom{m}{2}$ many times. 
This means that we make a list $e_1, \ldots, e_{\binom{m}{2}}$ of the two-element subsets 
of $X_0$, say $e_i=\{x_i, x'_i\}$. Now imagine that rather than picking the vertices 
$\bigl\{P^{x_ix_i'}\in\cP^{x_ix_i'}\st i\in \binom{m}{2}\bigr\}$ simultaneously we would pick
them one by one, each choice being uniformly at random and independent from all previous 
choices. For $h=0,\dots,\binom{m}{2}$ let $\ccC^h=\{P^{x_1x'_1},\dots,P^{x_hx'_h}\}$ and set
\[
Y_j(\ccC^h)=\big\{y\in X_j\st 
\{P^{x_ix_i'}, P^{x_iy}, P^{x'_iy}\}\in E(\cA^{x_ix'_iy}) 
\text{ holds for all } i\in[h]
\bigr\}\,.
\]
Thereby we get for every $j\in[r-1]$ a sequence of sets
\[
X_j=Y_j(\ccC^0)\supseteq Y_j(\ccC^1)\supseteq\ldots\supseteq Y_j(\ccC^{\binom{m}{2}})=Y_j\,.
\]  
By the case $m=2$ of our lemma, for every $h\in \big[\binom{m}{2}\bigr]$ 
the event
\[
\cE^h=\bigl\{\text{for every } j\in[r-1] \text{ we have } |Y_j(\ccC^h)|\ge\tfrac\eps2\,|Y_j(\ccC^{h-1})|
\bigr\}
\]
has the property that
\[
\PP\bigl(\cE^h\,|\,\ccC^{h-1}\bigr)\ge\tfrac\eps2
\]
holds for every fixed choice of $P^{x_1, x'_1}, \ldots, P^{x_{h-1}, x'_{h-1}}$. 
It follows that the event $\cE$ that all the events 
$\cE^1, \ldots, \cE^{\binom{m}{2}}$ happen 
has at least the probability~$(\tfrac\eps2)^{\binom{m}{2}}$. 
Moreover, since $\cE$ implies for every $j\in[r-1]$
\[
	|Y_j|\ge (\tfrac\eps2)^{\binom{m}{2}}\,|X_j|\,,
\]
we are thereby done.
\end{proof}

The next and final Proposition tells what we can achieve in the $k$-th step of the proof of 
Proposition~\ref{lem:fortress}. In particular, for the special case $k=r$ the second items in the 
assumption and in the conclusion  of Proposition~\ref{lem:general} are void and the statement 
coincides with Proposition~\ref{lem:fortress}.

\begin{prop}\label{lem:general}
Given any integers $r, m\ge 2$, some $k\in [r]$, and a real $\eps>0$, 
there exists an integer $M$ and reals $\delta, \eta>0$ such that the 
following holds: 

Suppose 
\begin{enumerate}
\item[$\bullet$]
that $X_0, X_1, \ldots, X_{r-k}$ are disjoint subsets of the index set of some 
$\bigl(\tfrac{r-2}{r-1}+\eps, \delta, \ee\bigr)$-dense reduced hypergraph $\cA$,
where $X_0$ is a $[k, M]$-system,
\item[$\bullet$]
and that $\ccC_j=\{P^{xy}\st x\in X_0 \text{ and } y\in X_j\}$ is an 
$\bigl(\tfrac{r-2}{r-1}+\eps\bigr)$-admissible $(X_0, X_j)$-selection for $j\in[r-k]$.
\end{enumerate}
Then there are 
\begin{enumerate}
\item[$\bullet$]
a $[k,m]$-subsystem $Z_0\subseteq X_0$ carrying a $[k,m]$-fortress
\[
\cF=\bigl\{P^{zz'}_d\st z, z'\in Z_0, z\ne z', \text{ and } d\in Q(z\wedge z')\bigr\}
\]
\item[$\bullet$]
and sets $Y_j\subseteq X_j$ with $|Y_j|\ge \eta\,|X_j|$ for $j\in [r-k]$ 
\end{enumerate}
such that for every distinct $z, z'\in Z_0$, $d\in Q(z\wedge z')$, and
$j\in[r-k]$ we have 
\begin{equation}\label{eq:goal}
\{P^{zz'}_d, P^{zy}, P^{z'y}\}\in E(\cA^{zz'y})\quad\text{for every $y\in Y_j$,}
\end{equation}
where $P^{zy}$ and $P^{z'y}$ are given by $\ccC_j$.
\end{prop}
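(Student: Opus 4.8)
The plan is to prove Proposition~\ref{lem:general} by induction on $k$, with the base case $k=1$ being a straightforward consequence of Lemma~\ref{lem:base} (applied after shrinking $X_0$ to a set of size $m$ via Lemma~\ref{lem:system}, noting that the tree structure for a $[1,m]$-system is trivial and property~\ref{it:F2} is void by Remark~\ref{rem:1fortress}). So assume $k\ge 2$ and that the statement holds for $k-1$. Given $X_0$, a $[k,M]$-system with underlying $M$-ary tree $T$, I would first split off the top level: write $X_0=\bigdcup_{\sigma\in\ccS_T(\vn)} X_0^{(\sigma)}$, where $X_0^{(\sigma)}$ is the $[k-1,M]$-system hanging below the successor $\sigma$. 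The idea is to process these $M$ branches one at a time, using the induction hypothesis on one branch to produce a fortress there together with well-attached linearly-sized survivors in all the other branches and in $X_1,\dots,X_{r-k}$, then iterate.

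Here is the key maneuver in more detail. Fix the first branch $\sigma_1$. Apply the induction hypothesis (for $k-1$, with $r$ unchanged, and with the role of $X_1,\dots,X_{r-(k-1)}$ played by the other branches $X_0^{(\sigma)}$, $\sigma\ne\sigma_1$, together with $X_1,\dots,X_{r-k}$ — there are $(M-1)+(r-k) = r-(k-1) + (M-2)$ of these, so one must first throw away all but $r-(k-1)$ of the sibling branches, or rather keep enough of them; here one has to be a little careful about counting, but $M$ is huge so there is plenty of room, and in fact one wants to retain \emph{one} sibling branch per later step, hence $M\ge$ something like $r$ suffices at the top level but the recursion forces $M$ to be chosen via the hierarchy $m\ll\dots\ll M$). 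Before invoking it, one must check the admissibility hypothesis: the selections $\ccC_j$ restricted to $X_0^{(\sigma_1)}$ are still $\bigl(\tfrac{r-2}{r-1}+\eps\bigr)$-admissible, and for the sibling branches we need to \emph{manufacture} admissible selections $\{P^{xy}\colon x\in X_0^{(\sigma_1)},\ y\in X_0^{(\sigma)}\}$ — this is exactly where the $(d,\delta,\ee)$-denseness of $\cA$ enters: for each pair of branches and each $y$, all but a $\delta$-fraction of choices of $P^{xy}\in\cP^{x y}$ are ``good'', so a simple averaging/greedy argument over the $[k-1,M]$-system (again Lemma~\ref{lem:system}-style, or a direct union bound since $\delta\ll M^{-1}$) lets us pass to a sub-$[k-1,M]$-system on which a genuine admissible selection exists. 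Having set this up, the induction hypothesis yields: a $[k-1,m]$-fortress inside $X_0^{(\sigma_1)}$, and linearly-sized well-attached survivors $Y^{(\sigma)}\subseteq X_0^{(\sigma)}$ and $Y_j\subseteq X_j$.

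Now iterate over $\sigma_2,\dots,\sigma_m$ (we only need $m$ branches in the end): at each step work inside the current survivors, produce a fortress on that branch and shrink the remaining branches and the $X_j$'s further. After $m$ steps we have $m$ branches, each carrying a $[k-1,m]$-fortress $\cF^{(\sigma_t)}$, and on top of that — crucially — for each ordered pair of distinct processed branches $\sigma_s,\sigma_t$ and each $x\in$ (branch $\sigma_s$'s survivor), $y\in$ (branch $\sigma_t$'s survivor) a vertex $P^{xy}$ which is well-attached \emph{in both directions} once we have also run the argument with the roles swapped; since admissibility is not symmetric (as the paper warns), one should be mildly careful to run the relevant Ramsey/averaging step in both orientations, or simply build the cross-branch vertices $P^{zz'}_d$ for $z,z'$ in different branches directly at the moment we process the later branch, using the fact that the target vertex must lie in the common neighbourhood of $P^{zy}$ and $P^{z'y}$ over all retained $y$, which is linearly large by well-attachedness plus denseness. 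The vertices $P^{zz'}_d$ with $z,z'$ in the \emph{same} branch $\sigma_t$ are inherited from $\cF^{(\sigma_t)}$; note $Q(z\wedge z')$ in the big tree, for $z,z'$ under the same successor, is naturally $\ccS_T(\vn)\setminus\{\sigma_t\}$ times $Q'(z'\wedge z'')$ in the sub-tree, and the defining incidences of condition~\ref{it:F2} at level $s\ge 1$ reduce to those for the sub-fortress while the new incidences at level $s=0$ are exactly the cross-branch conditions we just arranged — verifying that these fit together correctly is the bookkeeping heart of the argument. Assembling $Z_0=\bigdcup_{t\in[m]}Z_0^{(\sigma_t)}$, the new fortress $\cF$ and the final $Y_j$ gives the conclusion, with $\eta$ the product of the $m$ (or $m\cdot$(number of Ramsey steps)) shrinking factors coming from the induction and the denseness filtering.

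I expect the main obstacle to be precisely this last coherence check: organising the iteration so that (i) the fortress condition~\ref{it:F2} across \emph{three} leaves $a,b,c$ splitting at two different levels is satisfied by the vertices we have selected, distinguishing the case where all three lie in one top-branch (handled by induction), where $a$ is alone and $b,c$ share a branch (the $d(s+1)=a(s+1)$ clause with $s=0$ becomes the cross-branch incidence), and (ii) the well-attachedness of the \emph{newly chosen} cross-branch vertices $P^{zz'}_d$ to the surviving $Y_j$ is genuinely maintained — this requires choosing $P^{zz'}_d$ not freely but inside a common-neighbourhood set that is simultaneously large over all $y\in\bigcup_j Y_j$ \emph{and} compatible with the sub-fortress already built, which is why one processes branches sequentially rather than in parallel, and why the admissibility/denseness thresholds ($\tfrac{r-2}{r-1}+\eps$ versus the fraction $\tfrac{r-2}{r-1}$ one can afford to lose in neighbourhoods, leaving $\eps$-slack) are calibrated exactly as they are. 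The arithmetic that the various $\eps$-fractions never drop the relevant pair-degree below the $\bigl(\tfrac{r-2}{r-1}+\text{(something positive)}\bigr)$ needed to feed the next induction level is routine given the hierarchy, but keeping the constants consistent across the $k$-fold nesting is the part demanding care.
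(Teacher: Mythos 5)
The overall architecture of your argument is the right one and matches the paper's: induct on $k$, use Lemma~\ref{lem:base} in the base case, split $X_0$ at the root into $[k-1,M]$-systems $X_0^{(\sigma)}$, use the $(\cdot,\delta,\ee)$-denseness to manufacture admissible cross-branch selections at the top level, invoke the induction hypothesis on the sub-branches, and then reassemble. But there is a genuine gap in the iteration scheme you describe, and it is precisely the point you wave away with ``one wants to retain one sibling branch per later step.''

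You propose to iterate once over the $m$ retained branches $\sigma_1,\ldots,\sigma_m$, each pass producing a single $[k-1,m]$-fortress $\cF^{(\sigma_t)}$ on that branch. This is not enough vertices to assemble the $[k,m]$-fortress $\cF$. For $z,z'$ in the same top-branch $a$, the index set $Q(z\wedge z')$ in the full tree factors, as you yourself note, as $(\ccS_T(\vn)\setminus\{a\})\times Q'(z\wedge z')$. So for each $b\neq a$ in $A$ you need a \emph{separate} collection of fortress vertices $P^{a\circ z,\,a\circ z'}_{b\circ d}$, and --- crucially --- the axiom~\ref{it:F2} at depth $s=0$ forces these to lie in the common neighbourhood of the pair $P^{a\circ z,\,b\circ w}_\vn$, $P^{a\circ z',\,b\circ w}_\vn$ for all retained $w$ in branch~$b$. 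A single application of the induction hypothesis on branch $a$ cannot deliver $m-1$ different such layers with $m-1$ different attachment constraints, because the induction hypothesis has only $r-(k-1)$ side-set slots; you cannot feed it all sibling branches at once, and feeding it one sibling per pass gives only one layer per branch per pass. The correct bookkeeping, and the way the paper resolves exactly this tension, is to iterate over all $m(m-1)$ \emph{ordered pairs} $(a_h,b_h)$ of distinct elements of $A$: in step $h$ the induction hypothesis is applied to the current shrunken copy of branch $a_h$ with precisely one sibling branch $Z^{b_h,h-1}$ plus the $r-k$ sets $Y_j^{h-1}$ as side sets --- which is exactly the $r-(k-1)$ slots available --- and this pass produces the single layer $P^{a_h\circ z,\,a_h\circ z'}_{b_h\circ d}$ well-attached to branch $b_h$. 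All other branches are shrunk passively in that pass. Your $m$-pass scheme therefore undercounts the required iterations by a factor of $m-1$, and no reorganisation of a one-pass-per-branch argument can close this, since the attachment constraints for distinct labels $b\neq b'$ really are different constraints. (The nested constant hierarchy $M_0\ge\cdots\ge M_{m(m-1)}=m$ in the paper reflects this: there are $m(m-1)$ shrinking steps, not $m$.) The rest of your outline --- the random/averaging selection of the cross-branch vertices $P^{\cdot\cdot}_\vn$ using $\delta$-denseness, Lemma~\ref{lem:system} to re-extract regular subsystems, and the case split of~\ref{it:F2} into $s=0$ versus $s\ge 1$ at the end --- matches the paper and would go through once the iteration is fixed.
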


\begin{proof}We consider $r$ and $\eps$ to be fixed and proceed by induction on $k$. 

To deal with the base case $k=1$ let $m\geq 2$ be given and set $M=m$. Moreover, 
set $\eta=(\tfrac\eps2)^{\binom{m}{2}}$. In this case the 
density assumption of $\cA$ will not be utilised and, hence, $\delta>0$ can be chosen arbitrarily.
Then we are given disjoint subsets $X_0$, $X_1,\dots,X_{r-1}$ of the index set of~$\cA$, where 
$X_0$ forms a $[1,M]$-system,
and $\bigl(\tfrac{r-2}{r-1}+\eps\bigr)$-admissible $(X_0, X_j)$-selections $\ccC_j$ for 
$j\in[r-1]$.

Set $Z_0=X_0$ and by Lemma~\ref{lem:base} applied with $r$, $m$, $\eps$, 
$X_0$, $X_1,\dots,X_{r-1}$, and $\ccC_1,\dots,\ccC_{r-1}$ there exists a collection $\ccC$ of
vertices $P_\vn^{zz'}\in\cP^{zz'}$ for distinct $z, z'\in X_0$ and subsets $Y_j\subseteq X_j$ 
with $|Y_j|\ge \eta\,|X_j|$ for $j\in [r-1]$ such that for every $j\in[r-1]$
we have \[\{P_\vn^{zz'}, P^{zy}, P^{z'y}\}\in E(\cA^{zz'y})\] 
for all distinct $z, z'\in X_0$, $y\in Y_j$, and $P^{zy}, P^{z'y}\in \ccC_j$. 

Owing to $k=1$, the collection
\[
\cF=\bigl\{P_\vn^{zz'}\st z, z'\in X_0 \text{ and } z\ne z'\bigr\}
\]
is a $[1,m]$-fortress on $X_0$ for trivial reasons
(see Remark~\ref{rem:1fortress}). As stated before, $\cF$ has the required property
and this establishes the induction start.

For the induction step we suppose that $2\le k\le r$, and that the proposition is valid for $k-1$
in place of $k$. Whenever we apply this case of Proposition~\ref{lem:general} 
to some integer $m\ge2$ it returns 
an integer $M(m)$ and two positive reals called $\delta(m)$ and $\eta(m)$. 

From now on, we fix an integer $m\ge 2$ for which we would like to complete the induction step.
We divide the argument into five parts.

\medskip

\noindent
{\bf Part I. Choice of the constants.} To begin with, we define a decreasing sequence of
integers $M_0, \ldots, M_{m(m-1)}$ by backwards induction as follows:
\begin{equation}\label{eq:M-intro}
M_{m(m-1)}=m 
\qquad\text{ and }\qquad
M_{h-1}=M(M_h)+\Biggl\lceil \frac{(k-1)M_h}{\eta(M_h)}\Biggr\rceil
\quad \text{ for } h\in[m(m-1)]\,.
\end{equation}
We set $M=M_0$ and define
\begin{equation}\label{eq:del-intro}
\eta_{0}=(\tfrac\eps2)^{m^2M^{2(k-1)}} \qquad\text{ and }\qquad \eta_{h}=\eta_{h-1}\cdot \eta(M_h)
\quad \text{ for } h\in[m(m-1)]\,.
\end{equation}
Finally, let
\begin{equation}\label{eq:eta-intro}
\delta=\min\left(\bigl\{m^{-2}M^{-3(k-1)}\eta_0\bigr\}\cup\bigl\{\delta(M_h)\st h\in[m(m-1)]\bigr\}\right)
\tand \eta=\eta_{m(m-1)}\,.
\end{equation}
We shall prove the proposition for this choice of the constants $M$, $\delta$, and $\eta$.

\medskip

\noindent
{\bf Part II. The first level of the $[k,m]$-tree underlying $Z_0$.}  
Now let $\cA$, $X_0,\ldots, X_{r-k}$ as well as the 
$\bigl(\tfrac{r-2}{r-1}+\eps\bigr)$-admissible $(X_0, X_j)$-selections 
$\ccC_j=\{P^{xy}\in\cP^{xy}\st x\in X_0 \text{ and } y\in X_j\}$ for $j\in [r-k]$ 
be as described in the statement of the proposition. We denote the underlying tree of the 
$[k, M]$-system $X_0$ by~$T$. Let 
\[
A\subseteq\bigl\{a\in T\st |a|=1\bigr\} \ \text{of size}\ |A|=m
\]
be an arbitrary subset. We intend to construct the $[k,m]$-tree underlying $Z_0$ in such a way 
that the direct continuations of its root is $A$. 

For every $a\in A$ we set
\[
X_0^a=\{z\st a\circ z\in X_0\}
\]
and note that these sets are $[k-1, M]$-systems. In Part~IV 
we shall apply the induction hypothesis (several times) to $X_0^a$ for every $a\in A$, which then 
will lead to the desired $[k,m]$-system $Z_0$. 

Strictly speaking, $X_0^a$ is not a subset of the index set $I$ of $\cA$, which would be required 
for the application of the induction hypothesis. However, in such situations we can
simply identify~$X_0^a$ with $\{\tilde{x}\in X_0\st \tilde{x}|1=a\}\subseteq I$ to
circumvent this technicality and below we will suppress this identification.

\medskip

\noindent
{\bf Part III. The selection of some vertices  $P_\vn^{a\circ z, a'\circ z'}$.} 
Our next objective is to select the elements~$P_\vn^{a\circ z, a'\circ z'}$, i.e., those labeled with $\vn$,
of the desired fortress $\cF$.
In view of the conclusion of Proposition~\ref{lem:general} we shall choose 
\begin{enumerate}[label=\rmlabel]
\item\label{it:3i}for any distinct $a, a'\in A$, $z\in X_0^a$, and $z'\in X_0^{a'}$
a vertex $P_\vn^{a\circ z, a'\circ z'}\in\cP^{a\circ z, a'\circ z'}$
such that for all distinct $a, a'\in A$ the set
\[
\bigl\{P_\vn^{a\circ z, a'\circ z'}\st z\in X_0^a \text{ and } z'\in X_0^{a'}\bigr\} 
\]
is a $\bigl(\tfrac{r-2}{r-1}+\eps\bigr)$-admissible $(X_0^a, X_0^{a'})$-selection.
\item\label{it:3ii}and subsets $Y^0_j\subseteq X_j$ with $|Y^0_j|\ge\eta_0\,|X_j|$ for $j\in[r-k]$
\end{enumerate}
such that for all distinct $a, a'\in A$, $z\in X_0^a$, $z'\in X_0^{a'}$ and $j\in[r-k]$ we have 
\begin{equation}\label{eq:Y0j}
\bigl\{P_\vn^{a\circ z, a'\circ z'}, P^{a\circ z, y}, P^{a'\circ z', y}\bigr\}
\in E(\cA^{a\circ z, a'\circ z', y})\quad\text{for every $y\in Y^0_j$,}
\end{equation}
where $P^{a\circ z, y}$ and $P^{a'\circ z', y}$ are given by $\ccC_j$.

We choose $P_\vn^{a\circ z, a'\circ z'}\in\cP^{a\circ z, a'\circ z'}$ independently 
and uniformly at random and below we show that property~\ref{it:3i} holds with probability bigger than $1-\eta_0$
and property~\ref{it:3ii} is satisfied with probability at least $\eta_0$.

Dealing with property~\ref{it:3i} first, we consider the set
\[
K=\bigl\{ (\{a\circ z, a\circ z'\}, b\circ y) \st 
a, b\in A, \,\, a\ne b, \,\, z, z'\in X_0^a, \,\, z\ne z', \text{ and } y\in X_0^b\bigr\}
\]
consisting of those combinations of indices for which property~\ref{it:3i} has to be checked.
Since $|X_0^a|=M^{k-1}$ for all $a\in A$ and $|A|=m$, we have
\begin{equation}\label{eq:sizeK}
|K|=m(m-1)\binom{M^{k-1}}{2}M^{k-1}<m^2M^{3(k-1)}\,.
\end{equation}
Moreover, in view of the $\bigl(\tfrac{r-2}{r-1}+\eps, \delta, \ee\bigr)$-denseness of $\cA$
for each $(\{a\circ z, a\circ z'\}, b\circ y)\in K$ the ``bad event'' $\cE_{abzz'y}$ that the 
pair-degree of $P_\vn^{a\circ z, b\circ y}$ and $P_\vn^{a\circ z', b\circ y}$ in 
$\cP^{a\circ z, a\circ z'}$ is smaller than 
$\bigl(\tfrac{r-2}{r-1}+\eps\bigr)\,|\cP^{a\circ z, a\circ z'}|$ has probability 
  at most $\delta$. 
So the union bound together with \eqref{eq:sizeK} and \eqref{eq:eta-intro} yields
\begin{equation}\label{eq:Pradmis}
	\PP\bigl(\cE_{abzz'y}\ \text{occurs for some } (\{a\circ z, a\circ z'\}, b\circ y)\in K\bigr)
	\le |K|\delta 
	< \eta_0\,,
\end{equation}
which shows that with probability greater than $1-\eta_0$ the random selection satisfies 
property~\ref{it:3i}.

Next we turn to property~\ref{it:3ii}.
For this it is convenient to introduce the set
\[
L=\bigl\{\{a\circ z, a'\circ z'\}\st a, a'\in A, \,\, a\ne a',\,\, z\in X^a_0,\text{ and } z'\in X^{a'}_0      \bigr\}\,.
\]
For $j\in[r-k]$ we consider the random subsets 
\[
Y^0_j=\big\{y\in X_j\st\{P^{a\circ z,a'\circ z'}_\vn, P^{a\circ z, y}, P^{a'\circ z',y}\}\in E(\cA^{zz'y})
\text{ holds for all } \{a\circ z, a'\circ z'\}\in L\big\}\,,
\] 
where the randomness is induced by the random choice of $P^{a\circ z,a'\circ z'}_\vn\in\cP^{a\circ z,a'\circ z'}$ above.

Now we apply Lemma~\ref{lem:base} with $r-k+1$, $mM^{k-1}$ and $\bigcup_{a\in A}X_0^a$ in place of $r$, 
$m$, and $X_0$ and with $X_j$ and  $\ccC'_j=\{P^{a\circ x,y}\in \ccC_j\st a\in A\,,\ x\in X_0^a,\tand y\in X_j\}$
for $j\in[r-k]$.
It follows from our choice of $\eta_0$ in
\eqref{eq:del-intro} that the event
\[
\cE=\bigl\{|Y^0_j|\ge\eta_0\,|X_j| \ \text{for every}\ j\in[r-k] \bigr\} 
\]
holds with probability at least $\eta_0$. In fact, Lemma~\ref{lem:base} is more general, but at this 
point we only need it in this simpler form. 

Now $\PP(\cE)\ge \eta_0$ combined with~\eqref{eq:Pradmis} implies that there are 
the vertices $P^{a\circ z,a'\circ z'}_\vn\in\cP^{a\circ z,a'\circ z'}$ satisfying
both properties~\ref{it:3i} and~\ref{it:3ii} promised above.

\medskip

\noindent
{\bf Part IV. Inductive construction of subfortresses.} Let $e_i=(a_i, b_i)$ for  
$i\in \mbox{$[m(m-1)]$}$ enumerate all ordered pairs of distinct elements from $A$. We will show
that for every nonnegative integer $h\le m(m-1)$ the following statement is true:

\begin{description}
\item[$(*)_h$]
There are 
\begin{enumerate}
\item[$\bullet$]
for each $a\in A$ a sequence of sets 
\[
Z^{a,h}\subseteq Z^{a, h-1}\subseteq \ldots \subseteq Z^{a, 0}=X^a_0
\]
with $Z^{a, i}$ being a $[k-1, M_i]$-system for every nonnegative $i\le h$,
\item[$\bullet$]
and for each $j\in[r-k]$ a subset $Y^h_j\subseteq Y^0_j$ with $|Y^h_j|\ge \eta_h\, |X_j|$,
\end{enumerate}
such that for each $i\in [h]$ the $[k-1,M_h]$-system $Z^{a_i, h}$ carries a $[k-1,M_h]$-fortress
\[
{\cF^{\,i, h}}=\bigl\{P^{a_i\circ z,\,a_i\circ z'}_{b_i\circ d}\st z, z'\in Z^{a_i, h}, 
z\ne z', \text{ and } d\in Q(z\wedge z')\bigr\}
\]
such that for every distinct $z, z'\in Z^{a_i, h}$ and $d\in Q(z\wedge z')$
we have
\begin{align}\label{eq:F2-0}
\bigl\{P^{a_i\circ z,\,a_i\circ z'}_{b_i\circ d}, P^{a_i\circ z, \,b_i\circ w}_\vn, 
P^{a_i\circ z', \,b_i\circ w}_\vn\bigr\}&\in 
E\bigl(\cA^{a_i\circ z, \,a_i\circ z', \,b_i\circ w}\bigr) \quad \text{for every $w\in Z^{b_i, h}$} \\
\intertext{and for every $j\in[r-k]$ we have} 
\label{eq:*h2}
\bigl\{P^{a_i\circ z,\,a_i\circ z'}_{b_i\circ d}, P^{a_i\circ z, \,y}, 
P^{a_i\circ z', \,y}\bigr\}&\in E\bigl(\cA^{a_i\circ z, \,a_i\circ z', \,y}\bigr)\quad \text{for every $y\in Y^h_j$}\,.
\end{align}
\end{description}

To show this we argue by induction on $h$. In the base case $h=0$ we have to take 
$Z^{a,0}=X^a_0$ for all $a\in A$ and the sets $Y^0_j$ obtained in Part~III. 
The assertion about the existence of fortresses holds vacuously. 

Now suppose that some $h\in [m(m-1)]$ is such that $(*)_{h-1}$ holds with
$Z^{a,h-1}\subseteq X^a_0$ for $a\in A$, with $Y^{h-1}_j\subseteq Y_j^0$ for $j\in[r-k]$, and 
with the $[k-1,M_i]$-fortresses $\cF^{\,i, h-1}$ for $i\in [h-1]$. Now we apply the outer induction hypothesis from the proof of Proposition~\ref{lem:general} with $M_h$ in place of $m$
\begin{enumerate}
\item[$\bullet$]
to the $[k-1, M_{h-1}]$-system $Z^{a_h, h-1}$ (in place of $X_0$) and the $r-(k-1)$ further subsets 
$Z^{b_h, h-1}, Y^{h-1}_1, \ldots, Y^{h-1}_{r-k}$ of $I$ (in place of $X_1,\dots, X_{r-(k-1)}$),
\item[$\bullet$]
to the $\bigl(\tfrac{r-2}{r-1}+\eps\bigr)$-admissible $(Z^{a_h,h-1},Z^{b_h,h-1})$-selection
\[
\bigl\{P_\vn^{a_h\circ z,b_h\circ z'} \st z\in Z^{a_h,h-1} 
\text{ and } z'\in Z^{b_h,h-1}\bigr\} 
\]
obtained in Part III, and for $j\in[r-k]$ to the $\bigl(\tfrac{r-2}{r-1}+\eps\bigr)$-admissible $(Z^{a_h,h-1},Y^{h-1}_j)$-selections
\[
\bigl\{P^{a_h\circ z, y} \st z\in Z^{a_h,h-1} 
\text{ and } y\in Y^{h-1}_j\bigr\}\subseteq \ccC_j 
\]  
provided by the assumption.
\end{enumerate}
Since $M_{h-1}\ge M(M_h)$ holds by~\eqref{eq:M-intro}, this yields in particular 
\begin{enumerate}
\item[$\bullet$]
a $[k-1, M_h]$-system $Z^{a_h, h}\subseteq Z^{a_h, h-1}$ carrying a $[k-1,M_h]$-fortress
\[
{\cF^{\,h, h}}=\bigl\{P^{a_h\circ z,\,a_h\circ z'}_{b_h\circ d}\st z, z'\in Z^{a_h, h}, 
z\ne z', \text{ and } d\in Q(z\wedge z')\bigr\}\,,
\]
\item[$\bullet$]
a subset $W\subseteq Z^{b_h, h-1}$ and subsets 
$Y^h_j\subseteq Y^{h-1}_j$ for $j\in[r-k]$ with 
\begin{equation}\label{eq:sizeWY}
	|W|\ge \eta(M_h)\,|M_{h-1}|^{k-1}\qand
	|Y^h_j|\ge \eta(M_h)\,|Y^{h-1}_j|\,,
\end{equation}
\end{enumerate}
such that for every distinct $z, z'\in Z^{a_h, h}$ and $d\in Q(z\wedge z')$ we have  
\begin{align}\label{eq:new-h1}
\bigl\{P^{a_h\circ z,\,a_h\circ z'}_{b_h\circ d}, P^{a_h\circ z, \,b_h\circ w}_\vn, 
P^{a_h\circ z', \,b_h\circ w}_\vn\bigr\}&\in 
E\bigl(\cA^{a_h\circ z, \,a_h\circ z', \,b_h\circ w}\bigr) \quad  \text{for every $w\in W$} \\
\intertext{and for every $j\in[r-k]$ we have} 
\label{eq:new-h2}
\bigl\{P^{a_h\circ z,\,a_h\circ z'}_{b_h\circ d}, P^{a_h\circ z,y}, 
P^{a_h\circ z',y}\bigr\}&\in E\bigl(\cA^{a_h\circ z,\,a_h\circ z',\,y}\bigr) \quad  \text{for every $y\in Y^h_j$.}
\end{align}   

Now we are ready to define the remaining entities verifying $(*)_h$. Recall that we have already 
obtained the $[k-1, M_h]$-system $Z^{a_h, h}$, the sets $Y^h_j$ for $j\in [r-k]$, and the
fortress~$\cF^{\,h, h}$. Note that~\eqref{eq:M-intro} yields
$\eta(M_h)M_{h-1}/(k-1)\ge M_h$. 

Thus Lemma~\ref{lem:system} applied with $k-1$ and $M_{h-1}$ 
to the $[k-1,M_{h-1}]$-system $Z^{b_h, h-1}$ and $W\subseteq Z^{b_h, h-1}$
which has size $\eta(M_h)|M_{h-1}|^{k-1}$ (see~\eqref{eq:sizeWY})
tells us that there exists a 
$[k-1, M_h]$-system $Z^{b_h, h}\subseteq W$. 

Finally, for definiteness (somewhat wastefully) for any
$c\in A\setminus\{a_h, b_h\}$ we let $Z^{c, h}$ be an arbitrary $[k-1, M_h]$-subsystem of
$Z^{c, h-1}$ and for $i\in [h-1]$ we let $\cF^{\,i, h}$ denote the ``restriction'' of $\cF^{\,i, h-1}$
to $Z^{a_i, h}$. 

It remains to check that we have indeed met all conditions mentioned in $(*)_h$. That 
$Z^{a, h}\subseteq Z^{a,h-1}$ holds for every $a\in A$ 
follows from our construction. Due to the choice of~$Y^h_j$, the description of $Y^{h-1}_j$ in $(*)_{h-1}$, and~\eqref{eq:del-intro} 
we have 
\[
|Y^h_j|\overset{\eqref{eq:sizeWY}}{\ge} \eta(M_h)\,|Y^{h-1}_j|\ge\eta(M_h)\eta_{h-1}\,|X_j|=\eta_h\,|X_j|
\]
for every $j\in [r-k]$. The statements~\eqref{eq:F2-0} and~\eqref{eq:*h2} hold for $i\ne h$ by
$(*)_{h-1}$ and for $i=h$ by~\eqref{eq:new-h1} and \eqref{eq:new-h2} respectively. 
This concludes the proof of $(*)_h$.

\medskip

\noindent
{\bf Part V. Conclusion of the argument.} We will show that the
statement $(*)_{m(m-1)}$ from Part~IV yields the conclusion of the proposition. 
For that set $Z^a=Z^{a, m(m-1)}$ for $a\in A$, 
and $Y_j=Y^{m(m-1)}_j$ for $j\in[r-k]$. It follows from $M_{m(m-1)}=m$ that $Z^a$ is a
$[k-1, m]$-system for each $a\in A$ and consequently the set
\[
Z_0=\bigr\{a\circ z\st a\in A \text{ and } z\in Z^{a}\bigr\}
\]
is a $[k,m]$-system. Moreover, a quick thought reveals that the collection of vertices
\begin{align*}
\cF=\bigl\{P_\vn^{a\circ z, a'\circ z'} & \st a, a'\in A, a\ne a', z\in Z^a\text{ and } z'\in Z^{a'}\bigr\} \\
& \cup
\bigl\{P_{b\circ d}^{a\circ z, a\circ z'}\st a,b\in A,\,\,  a\ne b, \,\, z, z'\in Z^a, \,\, 
z\ne z', \text{ and } d\in Q(z\wedge z')\bigr\}
\end{align*}
has the correct index structure for being a fortress on $Z_0$. To see that $\cF$ actually 
is a fortress we need to verify the axiom \ref{it:F2}; if $s=0$ it follows from~\eqref{eq:F2-0}
and for $s>0$ it follows from all the $\cF^{\,i, m(m-1)}$ with $i\in[m(m-1)]$ being fortresses.

We contend that the system $Z_0$, the fortress $\cF$, and the sets $Y_j$ with $j\in[r-k]$
are as desired. The latter are large enough because of $\eta=\eta_{m(m-1)}$ and
$(*)_{m(m-1)}$. Finally~\eqref{eq:goal} was obtained in~\eqref{eq:Y0j} for $z|1\ne z'|1$ and otherwise in~\eqref{eq:*h2}. This completes the induction step and thus the proof of
Proposition~\ref{lem:general}.
\end{proof}

\begin{bibdiv}
\begin{biblist}
  
\bib{CG83}{article}{
   author={Chung, F. R. K.},
   author={Graham, R. L.},
   title={Edge-colored complete graphs with precisely colored subgraphs},
   journal={Combinatorica},
   volume={3},
   date={1983},
   number={3-4},
   pages={315--324},
   issn={0209-9683},
   review={\MR{729784 (85g:05107)}},
   doi={10.1007/BF02579187},
}

\bib{Er90}{article}{
   author={Erd{\H{o}}s, Paul},
   title={Problems and results on graphs and hypergraphs: similarities and
   differences},
   conference={
      title={Mathematics of Ramsey theory},
   },
   book={
      series={Algorithms Combin.},
      volume={5},
      publisher={Springer, Berlin},
   },
   date={1990},
   pages={12--28},
   review={\MR{1083590}},
}

\bib{ErSo82}{article}{
   author={Erd{\H{o}}s, P.},
   author={S{\'o}s, Vera T.},
   title={On Ramsey-Tur\'an type theorems for hypergraphs},
   journal={Combinatorica},
   volume={2},
   date={1982},
   number={3},
   pages={289--295},
   issn={0209-9683},
   review={\MR{698654 (85d:05185)}},
   doi={10.1007/BF02579235},
}

\bib{FR}{article}{
   author={Frankl, Peter},
   author={R{\"o}dl, Vojt{\v{e}}ch},
   title={Extremal problems on set systems},
   journal={Random Structures Algorithms},
   volume={20},
   date={2002},
   number={2},
   pages={131--164},
   issn={1042-9832},
   review={\MR{1884430 (2002m:05192)}},
   doi={10.1002/rsa.10017.abs},
}

\bib{GKV}{article}{
   author={Glebov, Roman},
   author={Kr\'a{\soft{l}}, Daniel},
   author={Volec, Jan},
   title={A problem of Erd\H os and S\'os on 3-graphs},
   journal={Israel J. Math.},
   volume={211},
   date={2016},
   number={1},
   pages={349--366},
   issn={0021-2172},
   review={\MR{3474967}},
   doi={10.1007/s11856-015-1267-4},
}

\bib{Gow06}{article}{
   author={Gowers, W. T.},
   title={Quasirandomness, counting and regularity for 3-uniform
   hypergraphs},
   journal={Combin. Probab. Comput.},
   volume={15},
   date={2006},
   number={1-2},
   pages={143--184},
   issn={0963-5483},
   review={\MR{2195580 (2008b:05175)}},
   doi={10.1017/S0963548305007236},
}

\bib{NPRS09}{article}{
   author={Nagle, Brendan},
   author={Poerschke, Annika},
   author={R{\"o}dl, Vojt{\v{e}}ch},
   author={Schacht, Mathias},
   title={Hypergraph regularity and quasi-randomness},
   conference={
      title={Proceedings of the Twentieth Annual ACM-SIAM Symposium on
      Discrete Algorithms},
   },
   book={
      publisher={SIAM, Philadelphia, PA},
   },
   date={2009},
   pages={227--235},
   review={\MR{2809322}},
}

\bib{RRS-a}{article}{
	author={Reiher, Chr.}, 
	author={R{\"o}dl, Vojt\v{e}ch},
	author={Schacht, Mathias},
	title={On a Tur\'an problem in weakly quasirandom $3$-uniform hypergraphs}, 
	eprint={1602.02290},
	note={Submitted},
}

\bib{RRS-b}{article}{
   author={Reiher, Chr.},
   author={R{\"o}dl, Vojt\v{e}ch},
   author={Schacht, Mathias},
   title={Embedding tetrahedra into quasirandom hypergraphs},
   journal={J. Combin. Theory Ser. B},
   volume={121},
   date={2016},
   pages={229--247},
   issn={0095-8956},
   review={\MR{3548293}},
   doi={10.1016/j.jctb.2016.06.008},
}

\bib{RoSchRL}{article}{
   author={R{\"o}dl, Vojt{\v{e}}ch},
   author={Schacht, Mathias},
   title={Regular partitions of hypergraphs: regularity lemmas},
   journal={Combin. Probab. Comput.},
   volume={16},
   date={2007},
   number={6},
   pages={833--885},
   issn={0963-5483},
   review={\MR{2351688 (2008h:05083)}},
}
		
\bib{RoSchCL}{article}{
   author={R{\"o}dl, Vojt{\v{e}}ch},
   author={Schacht, Mathias},
   title={Regular partitions of hypergraphs: counting lemmas},
   journal={Combin. Probab. Comput.},
   volume={16},
   date={2007},
   number={6},
   pages={887--901},
   issn={0963-5483},
   review={\MR{2351689 (2008j:05238)}},
}

\bib{Tu41}{article}{
   author={Tur{\'a}n, Paul},
   title={Eine Extremalaufgabe aus der Graphentheorie},
   language={Hungarian, with German summary},
   journal={Mat. Fiz. Lapok},
   volume={48},
   date={1941},
   pages={436--452},
   review={\MR{0018405 (8,284j)}},
}

\end{biblist}
\end{bibdiv}

\end{document}